\newcommand{\E}{\mathbb{E}}
\newcommand{\prob}{\mathbb{P}}
\newcommand{\p}{\mathbb{P}}
\DeclareMathOperator{\reals}{\mathbb{R}} 
\DeclareMathOperator{\integers}{\mathbb{Z}} 
\def\cE{{\mathcal E}}
\def\cF{{\mathcal F}}
\def\cN{{\mathcal N}}
\def\cT{{\mathcal T}}
\newtheorem{theorem}{Theorem}
\newtheorem{lemma}{Lemma}
\newtheorem{corollary}{Corollary}
\def \QED {\hfill{$\Box$}}
\newenvironment{proofof}[1]{\noindent {\em Proof of #1.  }}{\QED}
\DeclareMathOperator*{\argmin}{arg\,min}
\newcommand{\norm}[1]{\left \| #1 \right \|}
\DeclareMathOperator*{\esssup}{ess\,sup}
\newcommand{\cut}[1]{}
\begin{document}

\title{Bayes-optimal Methods for Finding the Source of a Cascade}

\name{Anirudh Sridhar, H. Vincent Poor}
\address{Princeton University \\ Department of Electrical Engineering \\ Princeton, NJ}

\maketitle

\begin{abstract}
We study the problem of estimating the source of a network cascade. The cascade initially starts from a single vertex and spreads deterministically over time, but only a noisy version of the propagation is observable. The goal is then to design a stopping time and estimator that will estimate the source well while ensuring the number of affected vertices is not too large. We rigorously formulate a Bayesian approach to the problem. If vertices can be labelled by vectors in Euclidean space (which is natural in spatial networks), the optimal estimator is the conditional mean estimator, and we derive an explicit form for the optimal stopping time under minimal assumptions on the cascade dynamics. We study the performance of the optimal stopping time on lattices, and show that a computationally efficient but suboptimal stopping time which compares the posterior variance to a threshold has near-optimal performance.  
\end{abstract}

\begin{keywords}
Network cascade, sequential estimation, optimal stopping theory, stochastic optimization
\end{keywords}

\section{Introduction}

 Network dynamics are often unstable: the behaviors of a small subset of vertices may rapidly disseminate to the rest of the network. This type of instability, known as a network cascade, has been observed in diverse applications such as the spread of diseases in geographical networks \cite{CF2010,flubreaks,Ant2015}, malware in a computer networks \cite{KW91,MI17}, and fake news in social networks \cite{tacchini2017some,AG17,Fourney2017}. When such cascading failures are present in a network, it is of utmost importance to find the source as fast as possible. Unfortunately, in many cases of interest the cascade is not directly observable. For instance, if an epidemic spreads over a contact network and an individual falls sick, it could be a symptom of the disease or it could be due to exogenous factors (e.g. allergies). Over time, one may better distinguish between these possibilities and construct better source estimates at the cost of allowing the cascade to propagate even further. An optimal algorithm in our framework will achieve the best possible tradeoff between estimation error and the number of agents affected by the cascade. 
 
 In this paper we provide a Bayesian approach to optimal source estimation. If vertices can be labelled by vectors in Euclidean space, we derive an explicit form for the optimal source estimation algorithm for a simple, but illustrative cascade model. We then study its performance of our source estimator in lattices, pinning down the runtime of the optimal algorithm for a certain class of Bayes priors for the source vertex. Though the optimal stopping time has a complex form, we show that a simple algorithm which compares the posterior variance to a threshold enjoys orderwise optimal performance in lattices. 
\subsection{A model of network cascades with noisy observations}
\label{subsec:model}

Let $G$ be a graph with vertex set $V$ and let time be indexed by a positive integer $t$. Initially, at $t = 0$, a single vertex $v_0$ is affected by the cascade. The cascade then spreads deterministically from $v_0$ over time, so that a vertex $u$ is affected by the cascade when $d(u,v_0) \le t$, where $d(\cdot, \cdot)$ denotes the shortest-path distance in $G$. Although the cascade is not directly observable, we assume a system monitor has access to {\it noisy} signals from each vertex, where the signal corresponding to $u$ at time $t$ is given by $y_u(t)$. Conditioned on the cascade source, the signals are independent over time, with 
$$
y_u(t) \sim \begin{cases}
Q_0 & d(u,v_0) > t \\
Q_1 & d(u,v_0) \le t.
\end{cases}
$$
where $Q_0$ and $Q_1$ are two mutually absolutely continuous probability measures. We can think of $y_u(t) \sim Q_0$ being typical behavior and $y_u(t) \sim Q_1$ as anomalous behavior caused by the cascade.\footnote{This models a variety of data-gathering methods, including noisy measurements and random sampling.} This type of model has been studied in recent literature in the context of cascade source estimation \cite{sridhar_poor_2019} and quickest detection of cascades \cite{ZouVeeravalli2018,ZVLT2018,ZVLT2019_IEEE,Rovatsos2019,qcd_empirical}.
\cut{
We make a few remarks about the distributions $Q_0$ and $Q_1$. We can think of $y_u(t) \sim Q_0$ being typical behavior and $y_u(t) \sim Q_1$ as anomalous behavior caused by the cascade. For instance, if we consider an epidemic spreading on a contact network, $Q_0$ and $Q_1$ could be the distribution of an individual's body temperature, in which case we may expect the mean of $Q_1$ to be larger than the mean of $Q_0$. 

Another important example which falls under the purview of this model is {\it random testing}. Suppose that each individual in the network gets tested at a particular time with probability $p$. The test is not perfect, and outputs an incorrect diagnosis with probability $\epsilon$ and a correct one with probability $1 - \epsilon$. Formally, for any $u \in V$, we have $y_u(t) \in \{0,1,\times\}$ where $y_u(t) = 0$ (resp., $y_u(t) = 1$) if $u$ is tested and the test indicates $u$ is not infected (resp., infected), and $y_u(t) = \times$ if $u$ is not tested at time $t$. In this situation, we have 
\begin{align*}
Q_0 & : = (q_{0}, q_{1}, q_{\times} ) = (p(1 - \epsilon), p \epsilon, 1 - p); \\
Q_1 & : = (q_{0}, q_{1}, q_{\times} ) = (p \epsilon, p(1 - \epsilon), 1 - p).
\end{align*}
Above, $q_{0}$ is the probability of sampling 0, with similar interpretations for $q_1, q_{\times}$. 
}

\subsection{Formulation as a stochastic optimization problem}

Let $(\Omega, \mathbb{P}, \cF)$ be a common probability space for all random objects, and let $\{\cF_t \}_{t = 0}^\infty$ be the natural filtration formed by the public states: $\cF_t := \sigma(y(0), \ldots, y(t) )$ where $y(t) : = \{y_u(t) \}_{u \in V}$. For a vertex $v$ and $t \ge 0$, we define $\cN_v(t)$ to be the set of vertices within distance $t$ of $v$, and $\partial \cN_v(t)$ is the set of vertices that are {\it exactly} distance $t$ from $v$. Any algorithm for estimating the cascade source $v_0$ may be represented by $(T,\hat{v})$, where $T$ is a stopping time and $\hat{v} = \left \{ \hat{v}(t) \right \}_{t = 0}^\infty$ is a sequence of source estimators, $\hat{v}(t)$ being $\cF_t$-measurable. The problem of finding an estimation algorithm that achieves the best tradeoff between the accuracy of an estimator, measured by $d(\hat{v}(t),v_0)$, and the number of affected vertices, given by $|\cN_{v_0}(t)|$ is captured by the following stochastic optimization problem: 
\begin{equation}
\label{eq:optimization_problem}
\min\limits_{T, \hat{v}} \mathbb{E} \left[ d(v_0,\hat{v}(T)) +  |\cN_{v_0}(T)| \right],
\end{equation}
where we assume, for simplicity, that $v_0$ is selected uniformly at random from the set of vertices. The formulation in \eqref{eq:optimization_problem} implies that the optimal estimator $\hat{v}(t)$ conditioned on $\cF_t$ minimizes the risk defined by the distance function: 
\begin{equation}
\label{eq:optimal_estimator}
\hat{v}(t) = \argmin_{v \in V} \E [ d(v_0,v) \mid \cF_t ].
\end{equation}
If $d$ is the shortest-path distance, the estimator is a complex function of the graph topology and the past observations. However, if vertices can be labelled by vectors in $k$-dimensional Euclidean space -- a natural assumption in spatial or geographical networks where vertices represent locations -- the analysis simplifies considerably. Given a probability distribution over the vertices of the graph, we can now compute basic statistical quantities such as the expected value and variance of a random variable sampled from the distribution. If we replace $d(v_0, \hat{v}(T))$ in \eqref{eq:optimization_problem} with $\norm{v_0 - \hat{v}(T)}_2^2$, the optimal estimator at a given time is exactly the conditional mean estimator: $\hat{v}(t) = \E [ v_0 \mid \cF_t]$. The conditional mean estimator enjoys a variety of mathematical properties such as a martingale structure and consistency\footnote{In \cite{sridhar_poor_2019} it was shown that $\frac{\p(v_0 = v\mid \cF_t)}{ \p(v_0 = u \mid \cF_t)} \to \infty$ as $t \to \infty$ if $v$ is the true source and $u$ is any other vertex, which implies consistency if the vertex set is finite.} which we will heavily exploit in deriving the optimal stopping time.  

\subsection{Related work} 

Shah and Zaman first studied the problem of estimating the source of a network cascade \cite{shah2011rumors,shah2010detecting}. They assume that after the cascade has spread for a long time, a snapshot of the set of affected vertices is perfectly observed. The goal is then to estimate the source based on this single snapshot. Several authors have built on this work, deriving confidence intervals for the source as well as considering various cascade models \cite{KhimLoh15,ying_zhu_source_localization}. This observation model is however not well-suited for real-time settings, in which one often has access to {\it streaming} data that is biased or noisy. The observation model in Section \ref{subsec:model}, on the other hand, naturally captures this latter setting. 

Our work naturally falls under the growing body of work on {\it sequential} inference of cascades, which assumes access to noisy streaming data (as opposed to inference from a noiseless snapshot) generated by the variants of the model in Section \ref{subsec:model}. Most of this literature has studied the quickest detection problem, which aims to detect with minimum delay when the cascade affects a certain number of vertices \cite{ZouVeeravalli2018,ZVLT2018,ZVLT2019_IEEE,Rovatsos2019,qcd_empirical}. The closest work to ours is by Sridhar and Poor \cite{sridhar_poor_2019}, who study the source estimation problem in a non-Bayesian setting. By phrasing source estimation as a multi-hypothesis testing problem, they demonstrate how to design matrix sequential probability ratio tests that are asymptotically optimal in terms of minimizing the expected runtime as the number of nodes in the network tend to infinity and the Type I error tends to zero. Interestingly, though our approach yields a drastically different algorithm for source estimation, the runtime of our algorithm matches that of \cite{sridhar_poor_2019} in certain regimes and outperforms it in others. We further remark that our results rely on the somewhat restrictive assumption of having vector-labeled vertices, while the results of \cite{sridhar_poor_2019} do not make such an assumption. Generalizing our results to larger graph families is an important subject of future work. 

\cut{
There are a number of recent works on the related problem of detecting network cascades. Zou, Veeravalli, Li, Towsley and Rovatsos study the problem of quickest detection of the event where a specified number of vertices are affected by the cascade \cite{ZouVeeravalli2018,ZVLT2018,ZVLT2019_IEEE,Rovatsos2019}. Their formulation is non-Bayesian and assumes that the cascade dynamics are unknown. As the cascade growth rate becomes slower, they show that tests based on cumulative log-likelihood ratios are asymptotically optimal. In further work, Zhang, Yao, Xie and Qiu designed algorithms for quickest detection when the cascade dynamics follows the Independent Cascade model  \cite{independent_cascade}, and studied their performance empirically \cite{qcd_empirical}. 
}

\section{Deriving the Bayes-optimal solution}
\label{sec:bayes}

We begin by making a few assumptions to simplify our analysis. The underlying graph $G = (V,E)$ is assumed to be connected, infinite, and locally finite\footnote{A graph is locally finite if every vertex has finite degree.}, which is a common assumption in the source estimation literature that allows us to ignore boundary effects (e.g., when the cascade reaches all the vertices). We further assume that $G$ is vertex-transitive\footnote{A graph is vertex-transitive if, for every pair of vertices $u,v \in V$, there is an automorphism $A$ such that $A(u) = v$.} which implies, in particular, that 
\begin{equation}
\label{eq:neighborhood_size}
| \cN_u(t) | = | \cN_v(t) |, \qquad \forall u,v \in V, t \ge 0.
\end{equation}
In light of \eqref{eq:neighborhood_size}, we will often write $|\cN(t)|$ instead of $|\cN_u(t)|$ as the size of the set does not depend on the choice of vertex $u$; we similarly write $| \partial \cN_u(t) | = | \partial \cN(t)|$. Examples of graphs which satisfy the outlined assumptions are regular trees and lattices. We expect that these assumptions may be relaxed to capture more realistic networks, and this will be a subject of future work. 

The following result describes the optimal solution to \eqref{eq:optimization_problem}.

\begin{theorem}
\label{thm:cascade_optimal_stopping}
Suppose that the prior distribution for $v_0$ has finite variance. Let $\cT_s$ be the set of stopping times such that $T \ge s$ a.s. for $T \in \cT_s$. Define, for any stopping time $T \in \cT_s$, the random variable 
\begin{equation}
\label{eq:f_s}
f_s(T) : = \norm{\hat{v}(T) - \hat{v}(s)}_2^2 - \left( |\cN(T)| - |\cN(s)| \right).
\end{equation}
Then the optimal stopping time that solves \eqref{eq:optimization_problem} is 
$$
T_{opt} : = \min \left \{ s \ge 0 : \sup\limits_{T \in \cT_s} \E \left[   f_s(T)  \mid \cF_s \right] = 0 \right \}.
$$
\end{theorem}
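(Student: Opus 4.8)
The plan is to turn the minimization \eqref{eq:optimization_problem} into a classical reward-maximization optimal stopping problem and then compute its Snell envelope in closed form. Write $\hat v(t) = \E[v_0 \mid \cF_t]$ and let $V(t) := \E\big[\, \norm{v_0 - \hat v(t)}_2^2 \mid \cF_t \,\big]$ be the posterior variance. Since the prior has finite variance, $(\hat v(t))_{t\ge0}$ is an $L^2$-bounded (hence uniformly integrable) martingale, so Doob's maximal inequality gives $\E\big[\sup_t \norm{\hat v(t)}_2^2\big] < \infty$, and $\hat v(t)$ converges a.s.\ and in $L^2$. \emph{Step 1 (a Pythagorean identity).} For stopping times $0 \le s \le T$ I would expand
\begin{equation*}
\norm{v_0 - \hat v(s)}_2^2 = \norm{v_0 - \hat v(T)}_2^2 + \norm{\hat v(T) - \hat v(s)}_2^2 + 2 \big\langle v_0 - \hat v(T),\, \hat v(T) - \hat v(s) \big\rangle,
\end{equation*}
take $\E[\,\cdot\mid\cF_s]$, and kill the cross term by conditioning first on $\cF_T \supseteq \cF_s$ and invoking optional sampling for the uniformly integrable martingale $\hat v$, i.e.\ $\E[v_0 \mid \cF_T] = \hat v(T)$; the maximal inequality supplies the $L^1$ control needed to justify this. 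The outcome is, for every stopping time $T \ge s$,
\begin{equation}
\label{eq:pyth}
\E\big[\, \norm{v_0 - \hat v(T)}_2^2 \mid \cF_s \,\big] = V(s) - \E\big[\, \norm{\hat v(T) - \hat v(s)}_2^2 \mid \cF_s \,\big].
\end{equation}

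\emph{Step 2 (reduction to a standard optimal stopping problem).} By vertex-transitivity $|\cN_{v_0}(T)| = |\cN(T)|$ is a deterministic function of $T$. Taking $s=0$ in \eqref{eq:pyth} and using $|\cN(0)| = 1$, the objective rewrites, for any stopping time $T$, as $\E\big[ \norm{v_0 - \hat v(T)}_2^2 + |\cN(T)| \big] = \E[V(0)] + 1 - \E[X_T]$, where $X_t := \norm{\hat v(t) - \hat v(0)}_2^2 - (|\cN(t)| - 1)$ is an adapted process with $X_0 = 0$. Hence \eqref{eq:optimization_problem} is equivalent to maximizing $\E[X_T]$ over stopping times. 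Since $X_t^+ \le \norm{\hat v(t) - \hat v(0)}_2^2 \le 4\sup_t\norm{\hat v(t)}_2^2 \in L^1$, we have $\E[\sup_t X_t^+] < \infty$, so standard optimal stopping theory applies: the Snell envelope $U_t := \esssup_{T\in\cT_t}\E[X_T\mid\cF_t]$ is well defined and $\tau^\star := \min\{t\ge0 : U_t = X_t\}$ is optimal. Moreover, since $G$ is infinite, connected and locally finite, $|\cN(t)|\to\infty$ while $\norm{\hat v(t)-\hat v(0)}_2^2$ stays bounded, so $\limsup_t X_t = -\infty$; never stopping therefore yields reward $-\infty < 0 = \E[X_0]$, which forces $\p(\tau^\star = \infty) = 0$, i.e.\ $\tau^\star < \infty$ a.s.

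\emph{Step 3 (identifying $\tau^\star$ with $T_{opt}$).} By the same orthogonality of martingale increments, $\E\big[ \norm{\hat v(T) - \hat v(0)}_2^2 \mid \cF_t \big] = \norm{\hat v(t) - \hat v(0)}_2^2 + \E\big[ \norm{\hat v(T) - \hat v(t)}_2^2 \mid \cF_t \big]$ for $T \ge t$, and hence $\E[X_T \mid \cF_t] = X_t + \E[f_t(T) \mid \cF_t]$ with $f_t$ as in \eqref{eq:f_s}. Pulling the $\cF_t$-measurable term $X_t$ out of the essential supremum over $T\in\cT_t$ gives $U_t = X_t + \sup_{T\in\cT_t}\E[f_t(T)\mid\cF_t]$, and since the supremum is always at least $f_t(t) = 0$, we obtain $\{U_t = X_t\} = \{\sup_{T\in\cT_t}\E[f_t(T)\mid\cF_t] = 0\}$. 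Therefore $\tau^\star = T_{opt}$, which proves the theorem.

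The step I expect to be the main obstacle is the appeal in Step 2 to optimal stopping theory with a cost that is \emph{unbounded} (because of the $|\cN(t)|$ term): one must argue both that an optimal rule exists and that it stops in finite time. This is exactly where the two structural ingredients are used --- the $L^2$ maximal inequality, which bounds the ``reward'' $X_t^+$ uniformly over time, and the divergence $|\cN(t)|\to\infty$, which makes stopping eventually mandatory and pins down $\tau^\star < \infty$ a.s. A secondary technical point is the careful justification of \eqref{eq:pyth} for possibly unbounded stopping times via optional sampling and uniform integrability.
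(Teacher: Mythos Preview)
Your proposal is correct and follows essentially the same route as the paper: reformulate the minimization as maximization of $\E[X_T]$ with $X_t=\norm{\hat v(t)-\hat v(0)}_2^2-|\cN(t)|$ via martingale orthogonality, invoke the classical optimal stopping theorem (the paper cites it as Theorem~3.7 of Poor--Hadjiliadis) after checking $\E[\sup_t X_t^+]<\infty$, and then rewrite the stopping condition $U_t=X_t$ in terms of $f_s$ using orthogonality once more. Your write-up is somewhat more careful than the paper's sketch in that you spell out Doob's maximal inequality for the integrability check, justify the Pythagorean identity at stopping times via optional sampling and uniform integrability, and verify $\tau^\star<\infty$ a.s.\ from $|\cN(t)|\to\infty$; these points are glossed over in the paper but your treatment of them is sound.
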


We can interpret the optimal stopping time as follows. The quantity $\E\left[ \norm{\hat{v}(T) - \hat{v}(s)}_2^2 \mid \cF_s \right]$ is the expected amount of information gained about $v_0$ at time $T$, conditioned on current information. On the other hand, $| \cN(T)| - |\cN(s)|$ is the number of new infections during this time. If, for every $T$, the information gained is less than the number of added infections, then it is not worth it to take even a single extra step. Conversely, if there is some $T \in \cT_s$ where the information gained is greater than the number of new infections until that point, then it is worth it to keep sampling. 

The proof of the theorem relies on a result from optimal stopping theory, which we briefly review. Let $\{Y_t \}_t$ be an adapted sequence of stochastic rewards, so that $Y_t$ is $\cF_t$-measurable. Let $\cT$ be the set of stopping times. The goal is to find a stopping time $T \in \cT$ that achieves $\sup_{T \in \cT} \E[ Y_T ]$. For any integer $k \ge 0$, define $\gamma_k : = \esssup_{T \in \cT_k} \E [ Y_T \mid \cF_k ]$. Informally, $\gamma_k$ is the maximum expected reward possible, given the information at time $k$. The following result gives a closed-form expression for the optimal stopping time in terms of $\gamma_k$. 

\begin{theorem}[Theorem 3.7 in \cite{poor_hadjiliadis_2008}]
\label{thm:optimal_stopping}
If $\E \left[ \sup_k Y_k^+ \right] < \infty$, then the stopping time $T_{opt} : = \min \{ k \ge 0 : Y_k = \gamma_k \}$ is optimal, in the sense that it achieves $\sup_{T \in \cT} \E[ Y_T]$. 
\end{theorem}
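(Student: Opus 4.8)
The plan is to recast the (Euclidean) minimization \eqref{eq:optimization_problem}, i.e.\ $\min_{T,\hat v}\E\bigl[\norm{v_0-\hat v(T)}_2^2+|\cN_{v_0}(T)|\bigr]$, as a maximization of an \emph{adapted} reward over stopping times and then quote Theorem \ref{thm:optimal_stopping}. First I would eliminate the estimator: for any fixed stopping time $T$, the quantity $\hat v(T)$ is $\cF_T$-measurable, so the conditional-mean property gives $\E[\norm{v_0-\hat v(T)}_2^2\mid\cF_T]\ge \sigma_T^2$, where I write $\sigma_t^2:=\E\bigl[\norm{v_0-\E[v_0\mid\cF_t]}_2^2\bigm|\cF_t\bigr]$, with equality when $\hat v(t)=\E[v_0\mid\cF_t]$; moreover $|\cN_{v_0}(T)|=|\cN(T)|$ by vertex-transitivity. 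Hence the joint problem equals $\min_T\E[\sigma_T^2+|\cN(T)|]=\sup_T\E[Y_T]$ for the adapted reward $Y_t:=-\sigma_t^2-|\cN(t)|$, attained by pairing each $T$ with the conditional-mean estimator. Since $Y_t\le-|\cN(0)|=-1<0$, the hypothesis $\E[\sup_kY_k^+]<\infty$ of Theorem \ref{thm:optimal_stopping} holds trivially (indeed $\sup_kY_k^+=0$), and finite prior variance (hence finite second moment of $v_0$) makes the value finite by taking $T=0$, so the problem is well posed.

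Next I would apply Theorem \ref{thm:optimal_stopping} directly: with $\gamma_k:=\esssup_{T\in\cT_k}\E[Y_T\mid\cF_k]$, the optimal stopping time is $\min\{k\ge0:Y_k=\gamma_k\}$. The remaining work is to put the event $\{Y_k=\gamma_k\}$ into the form in the statement. Because $Y_k$ is $\cF_k$-measurable and $k\in\cT_k$ forces $\gamma_k\ge Y_k$ a.s., this event coincides with $\{\esssup_{T\in\cT_k}\E[Y_T-Y_k\mid\cF_k]=0\}$. So it suffices to prove that, for every $s$ and every $T\in\cT_s$, one has $\E[Y_T-Y_s\mid\cF_s]=\E[f_s(T)\mid\cF_s]$ with $f_s$ as in \eqref{eq:f_s}.

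The heart of the argument is a conditional law of total variance for the conditional-mean martingale. Finite prior variance makes $\hat v(t)=\E[v_0\mid\cF_t]$ an $L^2$-bounded martingale; by optional sampling $\E[\hat v(T)\mid\cF_s]=\hat v(s)$, and conditioning the pointwise identity $\norm{v_0-\hat v(s)}_2^2=\norm{v_0-\hat v(T)}_2^2+\norm{\hat v(T)-\hat v(s)}_2^2+2\langle v_0-\hat v(T),\,\hat v(T)-\hat v(s)\rangle$ first on $\cF_T$ (which kills the cross term, since $\hat v(T)-\hat v(s)$ is $\cF_T$-measurable and $\E[v_0\mid\cF_T]=\hat v(T)$) and then on $\cF_s$ yields
\[
\sigma_s^2-\E[\sigma_T^2\mid\cF_s]\;=\;\E\bigl[\,\norm{\hat v(T)-\hat v(s)}_2^2\;\big|\;\cF_s\,\bigr].
\]
Substituting this, together with the $\cF_s$-measurability of $\sigma_s^2$ and $|\cN(s)|$, into $\E[Y_T-Y_s\mid\cF_s]=\sigma_s^2-\E[\sigma_T^2\mid\cF_s]-\E[\,|\cN(T)|-|\cN(s)|\mid\cF_s\,]$ produces exactly $\E[f_s(T)\mid\cF_s]$. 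Combining with the previous paragraph, $\{Y_s=\gamma_s\}=\{\esssup_{T\in\cT_s}\E[f_s(T)\mid\cF_s]=0\}$, which is the stated formula for $T_{opt}$.

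I expect the main obstacle to be the measure-theoretic bookkeeping around stopping times in the third step: justifying optional sampling and the orthogonality $\hat v(T)-\hat v(s)\perp L^2(\cF_s)$ for stopping times that need not be bounded. This uses the $L^2$-boundedness from the finite-variance hypothesis together with the $L^2$ martingale convergence theorem, and requires interpreting the relevant quantities at $\cF_\infty:=\sigma\bigl(\bigcup_t\cF_t\bigr)$ via $\hat v(\infty)=\E[v_0\mid\cF_\infty]$, after observing that any $T$ with $\p(T=\infty\mid\cF_s)>0$ makes both $\E[Y_T\mid\cF_s]$ and $\E[f_s(T)\mid\cF_s]$ equal to $-\infty$ and hence irrelevant to either essential supremum. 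A secondary point is to invoke the standard optimal-stopping fact that the family $\{\E[Y_T\mid\cF_k]\}_{T\in\cT_k}$ is upward directed, which is what makes the essential suprema $\gamma_k$ well defined and legitimizes writing the condition with a plain $\sup$ as in the statement.
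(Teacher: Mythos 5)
Your proposal does not prove the stated theorem. The statement in question is Theorem~\ref{thm:optimal_stopping}, a general result from discrete-time optimal stopping theory (Theorem~3.7 in \cite{poor_hadjiliadis_2008}): for an adapted reward $\{Y_t\}$ satisfying $\E[\sup_k Y_k^+]<\infty$, the rule $T_{opt}:=\min\{k\ge 0: Y_k=\gamma_k\}$, with $\gamma_k:=\esssup_{T\in\cT_k}\E[Y_T\mid\cF_k]$, attains $\sup_{T\in\cT}\E[Y_T]$. What you have written is instead a proof of the paper's Theorem~\ref{thm:cascade_optimal_stopping}, the application of this general fact to the cascade problem, and at the key step you explicitly invoke Theorem~\ref{thm:optimal_stopping} as a black box (``quote Theorem~\ref{thm:optimal_stopping}''). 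That makes the argument circular with respect to its purported target: you cannot use the theorem you are asked to establish as a lemma in its own proof. The conditional-mean estimator, the law of total variance, orthogonality of martingale increments, and vertex transitivity are all features of the cascade model and play no role in the general optimal stopping statement.

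A genuine proof of Theorem~\ref{thm:optimal_stopping} must work directly with the Snell envelope $\gamma_k$, with no reference to $v_0$, $\hat v$, or $\cN(t)$. The standard route: show that the family $\{\E[Y_T\mid\cF_k]\}_{T\in\cT_k}$ is directed upward so $\gamma_k$ is well defined and integrable under the hypothesis $\E[\sup_k Y_k^+]<\infty$; establish the Bellman recursion $\gamma_k=\max\bigl(Y_k,\,\E[\gamma_{k+1}\mid\cF_k]\bigr)$, which exhibits $\gamma$ as the minimal supermartingale dominating $Y$; prove that the stopped process $\gamma_{k\wedge T_{opt}}$ is a martingale (this is where the definition $T_{opt}=\min\{k:Y_k=\gamma_k\}$ is used, since before $T_{opt}$ the max in the recursion is achieved by the expectation term); and close with optional stopping to get $\E[Y_{T_{opt}}]=\E[\gamma_{T_{opt}}]=\gamma_0=\sup_T\E[Y_T]$, using the integrability hypothesis to justify the passage to the limit along $k\wedge T_{opt}\to T_{opt}$ and to handle the possibility that $T_{opt}$ is not a.s.\ finite. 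None of that appears in your write-up. (For what it is worth, your reformulation of the cascade problem via $Y_t:=-\sigma_t^2-|\cN(t)|$ rather than the paper's $\norm{\hat v(t)-\hat v(0)}_2^2-|\cN(t)|$ is equivalent up to an additive constant and is a perfectly sound way to prove Theorem~\ref{thm:cascade_optimal_stopping}; it is simply aimed at the wrong statement here.)
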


We will now use this result to prove Theorem \ref{thm:cascade_optimal_stopping}. 

\begin{proofof}{Theorem \ref{thm:cascade_optimal_stopping}}
We begin by reformulating \eqref{eq:optimization_problem} to be in the optimal stopping framework. By orthogonality of martingale increments and consistency of the conditional mean estimator, we can decompose $\E [ \| \hat{v}(k) - v_0 \|_2^2 ]$ as 
$$
\E \left[ \norm{\hat{v}(0) - v_0}_2^2 \right] - \E \left [ \norm{\hat{v}(k) - \hat{v}(0)}_2^2 \right ].
$$
It follows that optimal stopping time for \eqref{eq:optimization_problem} also achieves
$$
\sup\limits_{T \in \cT} \E \left[\norm{\hat{v}(T) - \hat{v}(0)}_2^2 - |\cN(T)| \right].
$$
To apply Theorem \ref{thm:optimal_stopping} it suffices to check that 
$$
\E \left[ \sup_k  \norm{\hat{v}(k) - \hat{v}(t)}_2^2 \right] < \infty,
$$
which follows from finiteness of $\E  [ \| v_0 - \hat{v}(0)\|_2^2 ]$ and orthogonality of martingale increments. Applying Theorem \ref{thm:optimal_stopping}, we see that 
\begin{multline*}
T_{opt} : = \min \left \{ s \ge 0:  \norm{\hat{v}(s) - \hat{v}(0)}_2^2 - |\cN(s)| \right. \\
\left. = \sup\limits_{T \in \cT_s} \E \left[  \norm{\hat{v}(T) - \hat{v}(0)}_2^2 - |\cN(T)| \mid \cF_s \right] \right \}.
\end{multline*}
Rearranging and again invoking the orthogonality of martingale increments, the condition in the stopping time becomes 
\begin{equation*}
\sup\limits_{T \in \cT_s} \left( \E \left[  \norm{\hat{v}(T) - \hat{v}(s) }_2^2  \mid \cF_s \right] - |\cN(T)| + |\cN(s)| \right)= 0.
\end{equation*}
\end{proofof}

We present two simpler stopping times in the following corollary; the proof follows easily from the structure of $T_{opt}$. 

\begin{corollary}
\label{cor:optimal_stopping}
Let $r$ be a positive integer. Recall the definition of $f_s$ from \eqref{eq:f_s} and define the stopping times
\begin{align*}
T_r &:  = \min\left \{ s \ge 0 : \E[f_s(r)\mid \cF_s] \le 0 \right \}. \\
T_+ &: = \min \left \{ s \ge 0 : \E \left[ \norm{v_0 - \hat{v}(s) }_2^2 \mid \cF_s \right] \le |\partial N(s)| \right \}. 
\end{align*}
Then $T_r \le T_{opt} \le T_+$ almost surely. 
\end{corollary}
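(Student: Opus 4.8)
The plan is to prove the two bounds separately, in each case via the same observation: for first-hitting times, $S_1 \le S_2$ follows once we show that the condition defining $S_1$ holds at time $S_2$. Two facts will be used throughout. First, putting $T = s$ in \eqref{eq:f_s} gives $f_s(s) = 0$, so $\sup_{T \in \cT_s} \E[f_s(T) \mid \cF_s] \ge 0$ for every $s$; consequently the condition $\sup_{T \in \cT_s} \E[f_s(T) \mid \cF_s] = 0$ defining $T_{opt}$ is equivalent to $\sup_{T \in \cT_s} \E[f_s(T) \mid \cF_s] \le 0$. Second, orthogonality of martingale increments (as already used in the proof of Theorem~\ref{thm:cascade_optimal_stopping}) gives $\E[\norm{\hat{v}(T) - \hat{v}(s)}_2^2 \mid \cF_s] = \E[\norm{v_0 - \hat{v}(s)}_2^2 \mid \cF_s] - \E[\norm{v_0 - \hat{v}(T)}_2^2 \mid \cF_s]$ for every $T \in \cT_s$, and in particular
\[
\E\left[\norm{\hat{v}(T) - \hat{v}(s)}_2^2 \mid \cF_s\right] \le \E\left[\norm{v_0 - \hat{v}(s)}_2^2 \mid \cF_s\right].
\]

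For $T_r \le T_{opt}$, I would first note that $T_r \le r$ always, since $f_r(r) = 0$ makes the inequality $\E[f_s(r) \mid \cF_s] \le 0$ hold at $s = r$. It then remains to treat the event $\{T_{opt} = s^*\}$ with $s^* < r$: here $r$ is a constant, hence an element of $\cT_{s^*}$, so $\E[f_{s^*}(r) \mid \cF_{s^*}]$ is one of the quantities appearing in $\sup_{T \in \cT_{s^*}} \E[f_{s^*}(T) \mid \cF_{s^*}]$; since this supremum equals $0$ on $\{T_{opt} = s^*\}$, we get $\E[f_{s^*}(r) \mid \cF_{s^*}] \le 0$, i.e.\ the defining inequality of $T_r$ holds at $s^*$, so $T_r \le s^* = T_{opt}$. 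On $\{T_{opt} \ge r\}$ we simply have $T_r \le r \le T_{opt}$. This gives $T_r \le T_{opt}$ almost surely.

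For $T_{opt} \le T_+$, I would show that at $s = T_+$ (on $\{T_+ < \infty\}$) one has $\E[f_s(T) \mid \cF_s] \le 0$ for every $T \in \cT_s$; taking the supremum and applying the first fact above then shows the condition defining $T_{opt}$ holds at $T_+$, so $T_{opt} \le T_+$. Fix $T \in \cT_s$ and split on the $\cF_s$-measurable event $\{T > s\}$. On $\{T = s\}$ we have $f_s(T) = 0$, hence $\E[f_s(T) \mid \cF_s] = 0$ there. On $\{T > s\}$ we have $|\cN(T)| - |\cN(s)| \ge |\cN(s+1)| - |\cN(s)| = |\partial \cN(s+1)| \ge |\partial \cN(s)|$, the last inequality because $|\partial \cN(\cdot)|$ is nondecreasing (which is immediate for lattices and for regular trees of degree at least two); combining this with the orthogonality bound and pulling the $\cF_s$-measurable indicator out of the conditional expectation yields $\E[f_s(T) \mid \cF_s] \le \E[\norm{v_0 - \hat{v}(s)}_2^2 \mid \cF_s] - |\partial \cN(s)|$ on $\{T > s\}$. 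At $s = T_+$ the right-hand side is $\le 0$ by definition of $T_+$, so $\E[f_s(T) \mid \cF_s] \le 0$ everywhere, as needed.

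I expect the main obstacle to be this last step: because $T$ ranges over genuine stopping times, one must split on $\{T = s\}$ versus $\{T > s\}$ and carefully use that both events are $\cF_s$-measurable to pull indicators through the conditional expectation --- in particular one must not replace $\mathbf{1}\{T > s\}$ by its conditional probability, which would weaken the bound in a way that breaks the argument. A secondary point is the reliance on $|\partial \cN(s+1)| \ge |\partial \cN(s)|$; if one does not wish to assume this for the ambient graph, it can be sidestepped by writing $|\partial \cN(s+1)|$ instead of $|\partial \cN(s)|$ in the definition of $T_+$. By comparison, $T_r \le T_{opt}$ is nearly immediate once one observes that $T_r \le r$.
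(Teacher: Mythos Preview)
Your argument is correct. The paper does not actually supply a proof of this corollary---it only remarks that ``the proof follows easily from the structure of $T_{opt}$''---so there is nothing to compare against beyond that hint, and your write-up is precisely the natural way to unpack it: for $T_r\le T_{opt}$ you use that the constant time $r$ lies in $\cT_{s^*}$ whenever $s^*<r$, and for $T_{opt}\le T_+$ you bound $\E[f_s(T)\mid\cF_s]$ uniformly over $T\in\cT_s$ via the orthogonality identity together with the pointwise bound $|\cN(T)|-|\cN(s)|\ge|\partial\cN(s+1)|$ on $\{T>s\}$.

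Two small remarks. First, your observation that the statement as written needs $|\partial\cN(s+1)|\ge|\partial\cN(s)|$ is accurate; the paper's stated examples (lattices and regular trees) satisfy this, and the only downstream use of $T_+$ in Theorem~\ref{thm:lattice_performance} needs merely $|\partial\cN(s)|\ge 1$, so nothing in the paper breaks. Your suggested fix (replace $|\partial\cN(s)|$ by $|\partial\cN(s+1)|$ in the definition of $T_+$) would make the corollary hold without the monotonicity assumption. Second, your handling of the split on $\{T=s\}$ versus $\{T>s\}$ is the right way to go: since $\{T>s\}\in\cF_s$, the indicator pulls out of the conditional expectation, and the orthogonality bound $\E[\|\hat v(T)-\hat v(s)\|_2^2\mid\cF_s]\le\E[\|v_0-\hat v(s)\|_2^2\mid\cF_s]$ holds almost surely (hence in particular on $\{T>s\}$), so no information is lost.
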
  

We remark that the stopping time $T_+$ may be a desirable alternative to $T_{opt}$ in practice since it is straightforward to compute from the posterior distribution and directly gives a bound on estimation error. Fortunately, as we shall see in the following section, $T_+$ is nearly optimal in lattices. 

\section{Performance analysis in lattices}
\label{sec:performance}

In this section we characterize the performance of the optimal stopping time on $d$-dimensional lattices. We highlight the main ideas and leave the details to an appendix \cite{sridhar2020bayesoptimal} for the interested reader. First, we introduce some notation and assumptions. Let $G = (V,E)$ be the $d$-dimensional lattice with vertex set $V$ is given by integer coordinates of $\reals^d$ so that $d(u,v) = \norm{u - v}_1$,\footnote{$\norm{x}_1 : = \sum_{i = 1}^d |x_i|$ denotes the $\ell_1$ norm.} and two vertices $u,v$ are adjacent if and only if $d(u,v) = 1$. For each $v \in V$, define the measure $\mathbb{P}_v : = \mathbb{P}(\cdot \mid v_0 = v)$. We next define the {\it neighborhood growth function} $h(t) = \sum_{s = 0}^t | \cN(t)|$ as well as its inverse $H = h^{-1}$. At a high level, $h(t)$ captures the rate of information gain and thus plays a central role in our analysis. In the $d$-dimensional lattice, $h(t) \asymp t^{d + 1}$ and $H(z) \asymp z^{\frac{1}{d + 1}}$ \cite[Lemma 3]{sridhar2020bayesoptimal}.

An important goal is to characterize the performance of our source estimator as a function of the number of vertices in the graph. We will therefore assume that the prior distribution of $v_0$ is uniform over a finite vertex set $V_n \subset V$, where the number of vertices in $V_n$ is on the order of $n$. We assume without loss of generality that $V_n = \cN_0(k)$ for some $k$.\footnote{This ensures that all vertices are as close as possible to each other, which only makes the problem harder. The center of the neighborhood can be chosen arbitrarily as lattices are vertex-transitive.} Choosing $k = n^{1/d}$ (assuming that $n^{1/d}$ is an integer) ensures that $|V_n| \asymp n$ \cite[Lemma 3]{sridhar2020bayesoptimal}. We let $T_r^n, T_{opt}^n$ and $T_+^n$ denote the stopping times corresponding to the initial uniform prior for $v_0$ over $V_n$. The following theorem characterizes the performance of both $T_{opt}^n$ and $T_+^n$ as $n \to \infty$.

\begin{theorem}
\label{thm:lattice_performance}
There exist constants $\Delta_1 \le \Delta_2$ depending only on $Q_0,Q_1$ and the lattice dimension $d$ such that 
$$
\lim\limits_{n \to \infty} \min\limits_{v \in V_n} \p_v \left( H(\Delta_1 \log n) \le T_{opt}  \le H(\Delta_2 \log n) \right) = 1.
$$
Furthermore, the same holds for $T_+$. 
\end{theorem}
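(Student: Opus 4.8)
The plan is to prove $T_{opt}\ge H(\Delta_1\log n)$ and $T_{opt}\le H(\Delta_2\log n)$ with probability tending to $1$, and to transfer both to $T_+$ via the sandwich $T_r\le T_{opt}\le T_+$ of Corollary~\ref{cor:optimal_stopping}: the upper bound will be proved directly for $T_+$ (which implies it for $T_{opt}$), while the lower bound for $T_{opt}$ gives it for $T_+$ since $T_+\ge T_{opt}$. Write $L_s(x)$ for the likelihood of $\cF_s$ under the hypothesis $v_0=x$, so that $\pi_s(x):=\p_v(v_0=x\mid\cF_s)\propto L_s(x)$ and $\hat v(s)=\sum_{x\in V_n}x\,\pi_s(x)$, and let $D_s(u,v)$ be the number of pairs $(w,s')$, $s'\le s$, for which $w$ lies in exactly one of $\cN_v(s'),\cN_u(s')$. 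A short combinatorial computation on the lattice gives $D_s(u,v)\asymp\min(d(u,v),s)\,|\partial\cN(s)|$; in particular $D_s(u,v)\gtrsim h(s)$ whenever $d(u,v)\ge s$. All estimates below will be uniform over the true source $v\in V_n$, since the only features of $V_n=\cN_0(k)$ that enter are $|V_n|\asymp n$ and $\mathrm{diam}(V_n)\asymp n^{1/d}$, which hold regardless of where $v$ sits in $V_n$.

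\emph{Upper bound.} Since $\pi_s(u)\le\min(1,L_s(u)/L_s(v))\le\sqrt{L_s(u)/L_s(v)}$, and $\sqrt{L_s(u)/L_s(v)}$ factors over the $D_s(u,v)$ discrepant coordinates, each contributing in expectation the Hellinger affinity $\rho:=\int\sqrt{dQ_0\,dQ_1}\in(0,1)$, we get $\E_v[\pi_s(u)]\le\rho^{D_s(u,v)}$, hence by Jensen $\E_v[\norm{v_0-\hat v(s)}_2^2]\le\sum_{u\in V_n}\norm{u-v}_2^2\,\rho^{D_s(u,v)}$. Splitting the sum at $d(u,v)=s$: the close part has at most $|\cN(s)|$ terms, each $\lesssim s^2\rho^{c\,s^{d}}$, hence $o(1)$ as $s\to\infty$; the far part has at most $n$ terms, each $\lesssim n^{2/d}\rho^{c\,h(s)}$. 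Taking $s=H(\Delta_2\log n)$, so $h(s)=\Delta_2\log n$, the far part is $\lesssim n^{1+2/d}\rho^{c\Delta_2\log n}\to0$ once $\Delta_2$ exceeds a threshold depending only on $d$ and $\rho$. Thus $\E_v[\norm{v_0-\hat v(s)}_2^2]\to0$, so $\E[\norm{v_0-\hat v(s)}_2^2\mid\cF_s]\to0$ in probability; since $|\partial\cN(s)|\ge1$, the defining event of $T_+$ holds at time $s$ with probability $\to1$, giving $T_+\le H(\Delta_2\log n)$ and hence $T_{opt}\le T_+\le H(\Delta_2\log n)$ with probability $\to1$.

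\emph{Lower bound.} Fix $s<t_1:=H(\Delta_1\log n)$; it suffices to exhibit some $T\in\cT_s$ with $\E[f_s(T)\mid\cF_s]>0$. Take the deterministic $T=r:=H(\Delta_2\log n)$. By orthogonality of martingale increments, $\E[f_s(r)\mid\cF_s]=V_s-\E[\norm{v_0-\hat v(r)}_2^2\mid\cF_s]-(|\cN(r)|-|\cN(s)|)$ with $V_s:=\E[\norm{v_0-\hat v(s)}_2^2\mid\cF_s]$; the middle term is $\le 1$ with probability $\to1$ by the upper-bound estimate, while $|\cN(r)|-|\cN(s)|\lesssim r^d\asymp(\log n)^{d/(d+1)}$. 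So it is enough to show $V_s\gg(\log n)^{d/(d+1)}$ — in fact we will get $V_s\gtrsim n^{2/d}$ — with probability $1-n^{-\Omega(1)}$, and then union bound over the $O((\log n)^{1/(d+1)})$ values $s<t_1$. The crux is a product decomposition of the far likelihood ratios. Put $S:=\{u\in V_n:d(u,v)>2s\}$; since $s\ll n^{1/d}$, $|S|\asymp n$ and $\mathrm{diam}_2(S)\asymp n^{1/d}$. For $u\in S$ the discrepant coordinates split into those supported on $\cN_v(s)$ (the same coordinates for every $u\in S$) and those supported on $\cN_u(s)$ (with the sets $\cN_u(s)$ pairwise disjoint for $u,u'\in S$ with $d(u,u')>2s$, and observed at law $Q_0$ under $\p_v$), so $L_s(u)/L_s(v)=A_s\,B_s(u)$ where $A_s$ is a common factor, $A_s$ is independent of $\{B_s(u)\}_{u\in S}$, $\E_v[B_s(u)]=\E_v[A_s]=1$, and $\E_v[B_s(u)^2]=\E_v[1/A_s]=\kappa^{\Theta(h(s))}$ with $\kappa:=1+\chi^2(Q_1\|Q_0)$. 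Choosing $\Delta_1$ small enough that $\kappa^{\Theta(\Delta_1\log n)}=n^{o(1)}$ — this is where the dependence of $\Delta_1$ on $Q_0,Q_1$ enters — a second-moment estimate (covariances $\E_v[B_s(u)B_s(u')]-1$ vanish unless $d(u,u')\le2s$, of which there are only $n\,|\cN(2s)|=n^{1+o(1)}$ pairs) shows $\sum_{u\in P}B_s(u)=(1+o(1))|P|$ with probability $1-n^{-\Omega(1)}$ for any fixed $P\subseteq S$ with $|P|\asymp n$. Combined with the crude bound $\sum_{u\notin S}L_s(u)/L_s(v)\le n^{o(1)}$ (via $\rho^{D_s}$-tail bounds, there being only $\lesssim s^d$ such vertices) and $A_s\ge n^{-1+o(1)}$ with probability $1-n^{-\Omega(1)}$ (Markov on $\E_v[1/A_s]$), this yields $\pi_s(S)\ge\frac16$ with probability $1-n^{-\Omega(1)}$. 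Finally the posterior conditioned on $S$ is $\pi_s^S(u)\propto B_s(u)$, free of $A_s$; splitting $S$ into two pieces $P_1,P_2$ of size $\asymp n$ at mutual distance $\gtrsim n^{1/d}$, the concentration above gives $\pi_s^S(P_i)\asymp1$, whence the conditional variance $\sum_{u\in S}\norm{u-\bar u_S}_2^2\,\pi_s^S(u)\gtrsim n^{2/d}$ (every center is $\gtrsim n^{1/d}$ from one of $P_1,P_2$). Since $V_s\ge\pi_s(S)\cdot\E[\norm{v_0-\hat v(s)}_2^2\mid\cF_s,\,v_0\in S]\ge\pi_s(S)\cdot(\text{that variance})$, we conclude $V_s\gtrsim n^{2/d}$ with probability $1-n^{-\Omega(1)}$, completing the lower bound and hence, via $T_+\ge T_{opt}$, the statement for $T_+$ as well.

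\emph{Main obstacle.} The delicate part is the lower bound, i.e. showing the posterior stays spread over $\Theta(n)$ vertices for all $s<H(\Delta_1\log n)$. A naive second moment of $\sum_u L_s(u)/L_s(v)$ is useless, since likelihood ratios are heavy-tailed and that moment grows like $\kappa^{h(s)}=n^{\Theta(1)}$; the resolution is the decomposition $L_s(u)/L_s(v)=A_s B_s(u)$, which strips the heavy common factor $A_s$ out of the conditional posterior on the far set and leaves essentially independent, genuinely light-tailed (for small $\Delta_1$) pieces. Making this rigorous requires handling (i) the non-independence of $B_s(u),B_s(u')$ when $d(u,u')\le2s$, (ii) the small-$s$ regime, where there is too little data for any concentration and the near-vertex mass must be controlled by hand, and (iii) uniformity over $v\in V_n$ including boundary sources; these are the longer arguments deferred to the appendix. (A finiteness hypothesis such as $\chi^2(Q_1\|Q_0)<\infty$ is needed for the lower bound to be non-vacuous; otherwise one simply takes $\Delta_1=0$.)
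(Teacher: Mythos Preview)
Your proof follows the same skeleton as the paper: sandwich $T_r\le T_{opt}\le T_+$, prove the upper bound directly for $T_+$ via a posterior-variance upper estimate at time $r=H(\Delta_2\log n)$, and prove the lower bound by showing $\E[f_s(r)\mid\cF_s]>0$ for all $s<H(\Delta_1\log n)$ through the martingale decomposition $\E[\|\hat v(r)-\hat v(s)\|_2^2\mid\cF_s]=V_s-\E[\|v_0-\hat v(r)\|_2^2\mid\cF_s]$. Where you diverge is in the proofs of the two variance estimates (the paper's Lemmas~\ref{lemma:variance_lower_regime} and~\ref{lemma:variance_upper_regime}).

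For the upper estimate, the paper works with the likelihood ratios $X_u(t)/X_v(t)$, invokes a Chernoff large-deviations bound on $\log(X_v/X_u)$ to define good events $\cE_{vu}$, and sums the resulting exponential tails. Your Hellinger route---$\pi_s(u)\le\sqrt{L_s(u)/L_s(v)}$ and $\E_v[\sqrt{L_s(u)/L_s(v)}]=\rho^{D_s(u,v)}$---is shorter and sidesteps the event-by-event analysis, at the cost of giving only a first-moment (Markov) bound rather than the paper's exponential-in-$t$ probability estimate; this is still enough for the union bound over $s<t_1$, since the expectation decays like $\exp(-c(\log n)^{d/(d+1)})$.

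For the lower estimate, the paper writes $V_s=\frac{1}{Z}\sum_u\|u\|_2^2X_u-\frac{1}{Z^2}\|\sum_u uX_u\|_2^2$ and controls each piece directly by first and second moments of $X_u$, using that $\E_v[X_u]=1$ and $\mathrm{Cov}_v(X_u,X_w)=0$ for far pairs, together with a Chebyshev bound showing $Z\approx n$. Your decomposition $L_s(u)/L_s(v)=A_sB_s(u)$ on the far set $S$ is really the same combinatorics---$A_s=1/X_v$, $B_s(u)=X_u$---but you package it as (i) concentration of $\sum_P B_s(u)$, which is exactly the paper's covariance estimate, and (ii) a conditional-variance argument on $S$ via two well-separated blocks $P_1,P_2$, replacing the paper's explicit bound on $\|\sum_u uX_u\|_2^2$. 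Your packaging makes the heuristic (posterior nearly uniform on vertices far from the source) more transparent; the paper's is more mechanical but slightly more quantitative. One point you pass over quickly: bounding $\E[\|v_0-\hat v(r)\|_2^2\mid\cF_s]$ requires care since the conditioning is on $\cF_s$, not $\cF_r$; the paper handles this by introducing the $\cF_r$-event $\cE$, towering, and applying Markov to $\p(\cE^c\mid\cF_s)$. Your one-line appeal to ``the upper-bound estimate'' is recoverable (tower plus Markov, or Doob's maximal inequality to handle all $s$ at once), but it is worth making explicit.
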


In $d$-dimensional lattices, this implies that $T_{opt}^n \asymp T_+^n \asymp (\log n)^{\frac{1}{d + 1}}$ with high probability. We remark that this matches the behavior of the non-Bayes optimal algorithm in certain regimes, and outperforms it in others \cite{sridhar_poor_2019}.

The proof of Theorem \ref{thm:lattice_performance} follows from characterizing the variance of the posterior distribution. The following lemma shows that initially, the posterior variance remains large for a long time. 

\begin{lemma}
\label{lemma:variance_lower_regime}
There is a constant $\Delta_1 = \Delta_1(Q_0,Q_1,d)$ such that for $n$ sufficiently large, if $t \le H \left(\Delta_1 \log n \right)$, then there are constants $a_1, b_1  > 0$ depending only on the dimension $d$ of the lattice such that 
$$
\min\limits_{v \in V_n }\p_v \left( \E [ \norm{ v_0 - \hat{v}(t)}_2^2 \mid \cF_t ] \ge a_1n^{2/d} \right) \ge 1- b_1n^{-1/2}.
$$
\end{lemma}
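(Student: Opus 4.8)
The plan is to show that, with high probability under $\p_v$, the posterior distribution of $v_0$ at time $t$ remains spread out over a set of $\asymp n$ vertices, which forces the posterior variance to be $\asymp n^{2/d}$. The starting point is the exact form of the posterior: by Bayes' rule with a uniform prior on $V_n$, for any candidate source $u \in V_n$,
\begin{equation*}
\p(v_0 = u \mid \cF_t) \propto \prod_{s=0}^t \prod_{w \in V} \frac{dQ_{\mathds{1}[d(w,u) \le s]}}{d\mu}(y_w(s)),
\end{equation*}
so the log-posterior ratio between $u$ and the true source $v$ is a sum of log-likelihood-ratio terms coming only from vertices $w$ and times $s$ where the infection status under $u$ and under $v$ disagree, i.e. $\mathds{1}[d(w,u)\le s] \ne \mathds{1}[d(w,v)\le s]$. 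The number of such (disagreement) terms is controlled by $d(u,v)$ and the neighborhood growth function $h$: for $u$ within a bounded multiple of $t$ of $v$, the number of disagreeing terms up to time $t$ is $O(h(t))$ (this is essentially the computation behind $h(t)\asymp t^{d+1}$ in \cite[Lemma 3]{sridhar2020bayesoptimal}). Under $\p_v$ each such term has a fixed negative mean $-D$ (a KL-type quantity depending on $Q_0,Q_1$) and bounded variance, so the log-posterior ratio is, in expectation, at least $-C\,h(t)$ for vertices $u$ at distance $O(t)$ from $v$.

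The key quantitative step is then: choose $\Delta_1$ small enough that when $t \le H(\Delta_1 \log n)$ we have $h(t) \le \Delta_1 \log n$, so that $C\, h(t) \le (C\Delta_1)\log n \le \tfrac14 \log n$, say. I would then apply a concentration inequality (Hoeffding or Bernstein, using mutual absolute continuity of $Q_0,Q_1$ to get sub-exponential tails on the log-likelihood ratios, or a truncation argument if only finite variance is assumed) to show that for a \emph{fixed} $u$ within distance $cn^{1/d}$ of $v$, the log-posterior ratio $\log \tfrac{\p(v_0=u\mid\cF_t)}{\p(v_0=v\mid\cF_t)}$ exceeds $-\tfrac12\log n$ with probability at least $1 - n^{-3/2}$ (the deviation needed from the mean is $O(\log n)$, and with $h(t)=O(\log n)$ terms this is a constant-order deviation per term after rescaling — one should be careful here, which is why the exponent in the bound is only $n^{-1/2}$ after the union bound). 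Summing this failure probability over the $\asymp n$ vertices $u$ in a ball $B$ of radius $\asymp n^{1/d}$ around $v$, a union bound gives that with probability $\ge 1 - b_1 n^{-1/2}$, \emph{every} $u \in B$ has posterior mass at least $n^{-1/2}/|V_n| \cdot (\text{const})$ relative to $v$; more precisely the posterior restricted to $B$ puts mass bounded below by a constant on a set of diameter $\asymp n^{1/d}$.

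From here the variance lower bound is routine: if a probability measure on $\reals^d$ places mass at least $p_0>0$ on each of $\asymp n$ lattice points filling a ball of radius $\asymp n^{1/d}$, then for \emph{any} point $\hat v$ (in particular $\hat v(t) = \E[v_0\mid\cF_t]$), at least a constant fraction of that mass lies at distance $\ge c n^{1/d}$ from $\hat v$, giving $\E[\|v_0 - \hat v(t)\|_2^2 \mid \cF_t] \ge a_1 n^{2/d}$. The main obstacle is the concentration step in the middle paragraph: one needs the deviation probability for a single vertex to beat the $\asymp n$-term union bound, and since the number of summands $h(t)$ is only logarithmic in $n$ while the required deviation is also logarithmic, this is a genuine large-deviations regime rather than a central-limit one — getting the constants $\Delta_1$ and the tail exponent $1/2$ to line up (and handling vertices $u$ at varying distances from $v$, whose disagreement counts and hence fluctuation scales differ) is where the real work lies. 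A secondary technical point is justifying the sub-exponential tail bounds under only the finite-variance assumption on the prior; since the prior is uniform on a finite set this is automatic, and the randomness in the likelihood ratios is handled separately via the mutual absolute continuity of $Q_0$ and $Q_1$.
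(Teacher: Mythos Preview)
Your approach differs from the paper's and has a genuine gap in the final step. Suppose your concentration argument goes through and you establish $\pi_u(t)/\pi_v(t) \ge n^{-1/2}$ for every $u \in V_n$ with probability $\ge 1 - b_1 n^{-1/2}$. Combining with the analogous upper bound $\pi_u(t)/\pi_v(t) \le n^{1/2}$, you get $\pi_u(t) \in [n^{-1/2}, n^{1/2}]\,\pi_v(t)$; since $\sum_u \pi_u = 1$ over $|V_n|\asymp n$ vertices this yields only $\pi_u(t) \ge c\,n^{-2}$ for each $u$. Your claim that ``at least a constant fraction of that mass lies at distance $\ge c\,n^{1/d}$ from $\hat v$'' then does \emph{not} follow: the far set carries mass at least $n^{-2}\cdot\Theta(n)=\Theta(n^{-1})$, which gives a variance lower bound of only $\Theta(n^{2/d-1})$, off by a factor of $n$. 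To close this you would need $\pi_u(t)/\pi_v(t)\ge c$ for a constant fraction of $u$, but under $\p_v$ the mean of $\log(\pi_u/\pi_v)$ for $u$ with $d(u,v)>2t$ is $-\Theta(h(t))=-\Theta(\Delta_1\log n)$, so for constant $\Delta_1$ the event $\{\pi_u/\pi_v\ge c\}$ is \emph{polynomially rare in $n$} for each individual $u$. No pointwise-plus-union-bound argument can therefore deliver the needed uniformity; the individual $X_u(t)$ genuinely fluctuate by polynomial factors in $n$.

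The paper sidesteps this by working with aggregate sums rather than individual posterior ratios. Writing $\pi_u(t)=X_u(t)/Z(t)$ with $X_u(t)=\prod_{s\le t}\prod_{w\in\cN_u(s)}\tfrac{dQ_1}{dQ_0}(y_w(s))$, the posterior variance is
\[
\frac{1}{Z(t)}\sum_{u\in V_n}\norm{u}_2^2 X_u(t)\;-\;\frac{1}{Z(t)^2}\Bigl\|\sum_{u\in V_n} u\,X_u(t)\Bigr\|_2^2 .
\]
The structural fact driving the proof is that $\mathrm{Cov}_v(X_u,X_w)=0$ whenever $d(u,w)>2t$ and is at most $\lambda^{h(t)}$ otherwise, so each $X_u$ is correlated with only $|\cN(2t)|\ll n$ others. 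Chebyshev on these weakly-dependent sums then gives $Z(t)\approx n$ and $\sum_u\norm{u}_2^2 X_u(t)\approx\sum_u\norm{u}_2^2\asymp n^{(d+2)/d}$, while a first-moment bound shows $\|\sum_u u\,X_u(t)\|_2^2$ is lower order; combining yields variance $\asymp n^{2/d}$ with probability $1-O(n^{-1/2})$. The point is that although each $X_u$ has variance $\lambda^{h(t)}-1$ (polynomial in $n$), the \emph{sums} concentrate thanks to the local correlation structure; a pointwise bound on $\pi_u$ cannot exploit this averaging and is why your route stalls.
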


In words, Lemma \ref{lemma:variance_lower_regime} shows that in the initial stages of the cascade spread, the level of uncertainty about the source location is quite large - this is essentially because the number of infections is too small to distinguish anomalies from random noise. To see this more rigorously, first observe that $\pi_u(t)$ is the posterior probability that $u$ is the source, then $\pi_u(t)$ primarily depends on $\{y_w(s)\}_{s \le t}$ for $w \in \cN_u(t)$; this is because if $u$ is the source, $\cN_u(t)$ is precisely the set of infected nodes. Hence if a pair of vertices $u, u'$ are far from the source, the signals within their neighborhoods are identically distributed so we expect $\pi_u(t) \approx \pi_{u'}(t)$ - the posterior is roughly uniform over vertices far from the source. If $t$ is not too large so that $\pi_{v_0}(t)$ is not too much larger than $\pi_u(t)$ and $\pi_{u'}(t)$, we expect that the posterior variance will not change significantly from its initial value. Technically, these ideas are carried out by first bounding the covariance between $\pi_u(t)$ and $\pi_{u'}(t)$ for each pair of vertices $u,u'$ and then applying basic concentration inequalities to characterize the posterior variance. We defer the details to \cite{sridhar2020bayesoptimal}.

The next lemma shows that the posterior variance exhibits a {\it sharp transition}: after a certain point, the posterior variance rapidly approaches zero. 

\begin{lemma}
\label{lemma:variance_upper_regime}
There are constants $\Delta_2,a_2,b_2$ depending only on $Q_0$ and $Q_1$ such that for $n$ sufficiently large and $t \ge H( \Delta_2 \log n)$,
$$
\min \limits_{v \in V_n} \p_v \left( \E [ \norm{v_0 - \hat{v}(t) }_2^2 \mid \cF_t] \le e^{-a_2 t} \right) \ge 1 - e^{-b_2t}.
$$
\end{lemma}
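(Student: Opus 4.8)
The plan is to show that once $t$ exceeds $H(\Delta_2 \log n)$, the posterior has concentrated almost entirely on the true source, so that the posterior variance is exponentially small in $t$. Write $\pi_u(t)$ for the posterior probability that $u$ is the source at time $t$, and let $v_0$ be the true source. The key quantity is the log-likelihood ratio between $v_0$ and a competing vertex $u$: conditioned on $v_0$ being the source, the observations $\{y_w(s)\}$ that distinguish $v_0$ from $u$ are exactly those at vertices in the symmetric difference of the infected sets $\cN_{v_0}(s) \triangle \cN_u(s)$, and each such vertex contributes an i.i.d. log-likelihood increment with distribution governed by $Q_0,Q_1$. By the consistency result quoted in the excerpt (from \cite{sridhar_poor_2019}), $\frac{\pi_{v_0}(t)}{\pi_u(t)} \to \infty$; the work here is to make this \emph{quantitative} and \emph{uniform over $u \in V_n$}. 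First I would lower-bound the expected cumulative log-likelihood ratio $\E_{v_0}\left[ \log \frac{\pi_{v_0}(t)}{\pi_u(t)} \right]$ by (KL divergence between $Q_1$ and $Q_0$, or between $Q_0$ and $Q_1$) times the number of ``resolved'' vertices up to time $t$, which grows like $h(t) \asymp t^{d+1}$ once $t \gtrsim d(u,v_0)$; for $u$ close to $v_0$ one instead gets growth proportional to $\min\{d(u,v_0),t\}\cdot t^{d-1}$-ish terms, and one must be slightly careful for the nearest competitors. Then I would apply a Chernoff/Hoeffding bound to the sum of i.i.d. log-likelihood increments to show $\p_{v_0}\left( \log \frac{\pi_{v_0}(t)}{\pi_u(t)} \le c\, t \right) \le e^{-c' t}$ for suitable constants, and union-bound over the at most $|V_n| \asymp n$ vertices $u$; choosing $\Delta_2$ large enough that at $t = H(\Delta_2 \log n)$ the per-vertex exponent $c't$ dominates $\log n$ makes the union bound succeed with the stated probability $1 - e^{-b_2 t}$.

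The second ingredient is to convert concentration of the posterior into a bound on the posterior variance. On the good event where $\pi_u(t) \le e^{-a_2' t}\, \pi_{v_0}(t)$ for all $u \neq v_0$, we have $\pi_{v_0}(t) \ge 1 - |V_n| e^{-a_2' t} \ge 1 - e^{-a_2'' t}$ (again by the choice of $\Delta_2$), so the total posterior mass off $v_0$ is at most $e^{-a_2'' t}$. Since $V_n \subseteq \cN_0(k)$ has diameter $O(k) = O(n^{1/d})$, every vertex is within distance $O(n^{1/d})$ of $\hat v(t) = \E[v_0 \mid \cF_t]$, so $\norm{v_0 - u}_2^2 \le C n^{2/d}$ for all $u \in V_n$; hence
$$
\E\left[ \norm{v_0 - \hat v(t)}_2^2 \mid \cF_t \right] \le \sum_{u \neq v_0} \pi_u(t)\, \norm{u - \hat v(t)}_2^2 \le C n^{2/d}\, e^{-a_2'' t}.
$$
Finally, for $t \ge H(\Delta_2 \log n)$ we have $h(t) \ge \Delta_2 \log n$, i.e. $t^{d+1} \gtrsim \log n$, so $n^{2/d} = e^{(2/d)\log n} \le e^{C'' t^{d+1}}$; this looks like it could overwhelm the $e^{-a_2'' t}$ factor, so the right move is to split the exponent: absorb the $n^{2/d}$ polynomial-in-$n$ factor using only a small fraction of the log-likelihood budget (possible because we are free to take $\Delta_2$ as large as we like, trading a larger constant in the runtime for a cleaner tail), leaving a residual exponent $\asymp t$ to give the clean bound $e^{-a_2 t}$.

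The main obstacle I anticipate is \emph{uniformity in $u$ together with the correct exponent}, specifically handling the vertices $u$ that are very close to $v_0$. For a far-away $u$, the number of resolved vertices by time $t$ is $\Theta(t^{d+1})$ and the log-likelihood ratio is comfortably linear in a large power of $t$; but for $u$ at distance $1$ from $v_0$, the infected sets $\cN_{v_0}(s)$ and $\cN_u(s)$ differ only in an $O(t^{d-1})$-sized ``shell,'' so the log-likelihood ratio accumulates more slowly, and one has to check it still grows at least linearly in $t$ with a dimension-dependent constant — which it does, since the shell has size $\asymp t^{d-1} \ge 1$ and there are $t$ time steps, giving $\asymp t$ increments. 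One also has to ensure the Chernoff bound constants and the KL quantities are strictly positive, which is exactly where mutual absolute continuity of $Q_0$ and $Q_1$ (assumed in the model) is used. Modulo these bookkeeping points, the argument is a standard ``likelihood concentration $\Rightarrow$ posterior concentration $\Rightarrow$ small variance'' chain, and the details are deferred to \cite{sridhar2020bayesoptimal}.
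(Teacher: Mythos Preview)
Your overall architecture is right and matches the paper: concentrate the log-likelihood ratio $\log\frac{\pi_{v_0}(t)}{\pi_u(t)}$ via a Chernoff bound (the paper cites \cite[Theorem 2]{sridhar_poor_2019}), union-bound over competitors, and convert posterior concentration into a variance bound. But there is a genuine quantitative gap in the way you propose to execute the union bound and the variance step.

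You claim that by choosing $\Delta_2$ large you can make the per-vertex exponent $c' t$ dominate $\log n$ at $t = H(\Delta_2 \log n)$. This is false in dimension $d\ge 1$: since $h(t)\asymp t^{d+1}$, the condition $h(t)\ge \Delta_2\log n$ only gives $t\asymp (\Delta_2\log n)^{1/(d+1)}$, and no fixed constant $\Delta_2$ makes $(\Delta_2\log n)^{1/(d+1)}\gtrsim \log n$. Consequently neither $n\,e^{-c' t}$ (your union bound) nor $n^{1+2/d}e^{-a_2'' t}$ (your variance bound with the global diameter) tends to zero. The linear-in-$t$ exponent, which is the correct rate for \emph{nearby} competitors, simply cannot absorb polynomial-in-$n$ factors at the optimal time scale.

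The paper avoids this by \emph{not} passing to a uniform exponent. It keeps the vertex-dependent rate $h_{vu}(t):=\sum_{s\le t}|\cN_v(s)\setminus\cN_u(s)|$, so that on the good event $\pi_u(t)/\pi_v(t)\le e^{-D\,h_{vu}(t)}$, and then splits the weighted sum $\sum_u \|u-v\|_2^2\,\pi_u(t)/\pi_v(t)$ (and the analogous failure-probability sum) into near terms $d(u,v)\le 2t$ and far terms $d(u,v)>2t$. For far $u$ one has $h_{vu}(t)=h(t)\asymp t^{d+1}$, which \emph{does} absorb the factors $n$ and $n^{2/d}$ once $\Delta_2$ is a large enough constant. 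For near $u$ the exponent is only $\ge t$, but both the count of such vertices and their squared distances are polynomial in $t$ (not in $n$), so $e^{-Dt}$ is already enough. You clearly see this dichotomy in your ``main obstacle'' paragraph; the fix is to use it in the argument itself rather than collapsing everything to the worst-case linear rate before summing.
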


\noindent The main idea behind the proof is that when $t \ge H(\Delta_2 \log n)$, $\pi_u(t)$ is relatively large for $u$ close to the source $v_0$ and quite small for $u$ far from the source. The key component is a large-deviations estimate for the likelihood ratio $\pi_u(t)/\pi_{v_0}(t)$ which was derived in \cite{sridhar_poor_2019}.

The proof of Theorem \ref{thm:lattice_performance} follows directly from the Lemmas \ref{lemma:variance_lower_regime} and \ref{lemma:variance_upper_regime}.

\begin{proofof}{Theorem \ref{thm:lattice_performance}}
In light of Corollary \ref{cor:optimal_stopping}, it suffices to prove that $T_r^n \ge H(\Delta_1 \log n)$ and $T_+ \le H(\Delta_2 \log n)$ with high probability. Since $|\partial \cN(t) | \ge 1$ for all $t$, Lemma \ref{lemma:variance_upper_regime} implies that 
$$
\lim\limits_{n \to \infty} \max\limits_{v \in V_n} \p_v \left( T_+ \le H(\Delta_2 \log n) \right) = 1. 
$$
Next, set $r = H(\Delta_2 \log n)$; we will show that $T_r \ge H(\Delta_1 \log n)$ with high probability, which amounts to providing a lower bound for $\E [ \norm{\hat{v}(r) - \hat{v}(t)}_2^2 \mid \cF_t]$, where we take $t \le H(\Delta_1 \log n)$. We can write $\E [ \norm{\hat{v}(r) - \hat{v}(t)}_2^2 \mid \cF_t ]$ as 
\begin{equation}
\label{eq:thm_var_decomp}
\E [ \norm{v_0 - \hat{v}(t)}_2^2 \mid \cF_t ] - \E [ \norm{v_0 - \hat{v}(r)}_2^2 \mid \cF_t ],
\end{equation}
which follows from orthogonality of martingale increments. Lemma \ref{lemma:variance_lower_regime} provides a lower bound for the first term in \eqref{eq:thm_var_decomp}, so we proceed by deriving an upper bound for the second term. Define the $\cF_r$-measurable event $\cE : = \{ \E [ \norm{v_0 - \hat{v}(r)}_2^2 \mid \cF_r ] \le 1\}$, and decompose $\E [ \norm{v_0 - \hat{v}(r)}_2^2 \mid \cF_t ]$ as 
\begin{multline*}
\E [ \norm{v_0 - \hat{v}(r)}_2^2 \mid \cF_t ] = \E [\E [ \norm{v_0 - \hat{v}(r)}_2^2 \mid \cF_r ] \mathds{1}_{\cE} \mid \cF_t] \\
+ \E [ \E [ \norm{v_0 - \hat{v}(r)}_2^2 \mid \cF_r] \mathds{1}_{\cE^c} \mid \cF_t].
\end{multline*}
The first term is bounded by 1 and the second is bounded by $\max_{u,v \in V_n} \norm{u - v}_2^2 \p (\cE^c \mid \cF_t) \le 4n^{2/d} \p(\cE^c \mid \cF_t)$. To bound $\p(\cE^c \mid \cF_t)$, first observe that 
$$
\p(\cE^c) = \frac{1}{n} \sum_{u \in V_n} \p_v (\cE^c) \le e^{-b_2 r}
$$
by Lemma \ref{lemma:variance_upper_regime}. Markov's inequality then implies $\p(\cE^c \mid \cF_t) \le e^{-b_2 r/2}$ with probability at least $1 - e^{-b_2 r/2}$. Hence Lemmas \ref{lemma:variance_lower_regime} and \ref{lemma:variance_upper_regime} imply that with probability at least $1 - b_1 n^{-1/2} - e^{-b_2 r/2}$, 
\begin{align*}
\E [ \norm{\hat{v}(r) - \hat{v}(t)}_2^2 \mid \cF_t ] & \ge a_1 n^{2/d} - 4n^{2/d} e^{-b_2r/2} - 1 \\
& \ge \frac{a_1}{2} n^{2/d},
\end{align*}
where the last inequality holds for $n$ sufficiently large. Taking a union bound over $t \le H(\Delta_1 \log n)$ shows that with probability at least $1 - b_1 H(\Delta_1 \log n) n^{-1/2} - H(\Delta_1 \log n) e^{- b_2 r/2}$,
$$
\min\limits_{0 \le t \le G(\Delta_1 \log n)}\E [ \norm{\hat{v}(r) - \hat{v}(t)}_2^2 \mid \cF_t ]  \ge \frac{a_1}{2} n^{2/d},
$$
which in turn implies that $T_r \ge H(\Delta_1 \log n)$ with high probability. 
\end{proofof}

\section{Conclusion}
\label{sec:conclusion}

In this work we have formulated and developed a Bayesian approach to the problem of estimating the source of a cascade, given noisy time-series observations of the network. If vertices can be labelled by vectors in Euclidean space, we use optimal stopping theory to derive the Bayes-optimal stopping time. We then studied the performance of the optimal stopping time in lattices. Though the optimal estimator has a complex description, the stopping time $T_+$ which compares the posterior variance to a threshold is orderwise optimal. There are a number of future directions, including a rigorous study of the estimator \eqref{eq:optimal_estimator} when there is no vector labeling, and a performance analysis of other cascade dynamics and graphs.

\clearpage

\bibliographystyle{IEEEtran}
\bibliography{ciss_2020_final}

\clearpage

\appendix

\section{Useful properties of lattices}

\begin{lemma}
\label{lemma:lattice_estimates}
Let $G$ be the $d$-dimensional lattice, and suppose that vertices are labelled according to their lattice coordinates. Let $\partial \mathcal{N}_u(t)$ be the set of vertices with distance exactly $t$ from $u$. Then for any vertex $u \in V$, and any non-negative integer $t$,
$$
\frac{2^d}{(d-1)^{d-1}} (t - 1)^{d-1} \le | \partial \cN_u(t) | \le \frac{2^d e^{d-1}}{(d-1)^{d-1}} (t + d - 1)^{d - 1}.
$$
Furthermore, we can find positive constants $a,a',b,b'$ depending on $d$ such that $at^d \le |\cN(t)| \le bt^d$ and $a't^{d + 1} \le h(t) \le b' t^{d + 1}$. 
\end{lemma}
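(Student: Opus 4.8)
The plan is to reduce both the shell estimate and the ball/growth estimates to the exact number of lattice points on an $\ell_1$-sphere, and then read off the polynomial orders by elementary manipulations. First I would record the identity
$$
| \partial \cN_u(t) | \;=\; \sum_{j = 1}^{\min(d,t)} 2^j \binom{d}{j} \binom{t - 1}{j - 1}, \qquad t \ge 1,
$$
which counts the points $x \in \integers^d$ with $\norm{x}_1 = t$ by choosing the set of $j$ nonzero coordinates ($\binom{d}{j}$ ways), assigning them signs ($2^j$ ways), and writing $t$ as an ordered sum of $j$ positive integers ($\binom{t-1}{j-1}$ compositions); vertex-transitivity makes this independent of $u$. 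The analogous ball identity $| \cN(t)| = \sum_{j=0}^{\min(d,t)} 2^j \binom{d}{j} \binom{t}{j}$ is obtained the same way, now using the hockey-stick identity $\sum_{m=j}^{t}\binom{m-1}{j-1} = \binom{t}{j}$.

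For the upper bound on $|\partial\cN_u(t)|$ I would bound $2^j \le 2^d$, enlarge the range of summation to $1 \le j \le d$ (the added terms vanish), and collapse the sum by Vandermonde's identity: re-indexing $i = j - 1$ and writing $\binom{d}{i+1} = \binom{d}{d-1-i}$ gives $\sum_{j=1}^{d}\binom{d}{j}\binom{t-1}{j-1} = \binom{t+d-1}{d-1}$. Hence $|\partial\cN_u(t)| \le 2^d \binom{t+d-1}{d-1}$, and the claimed bound follows from $\binom{t+d-1}{d-1} \le (t+d-1)^{d-1}/(d-1)!$ together with the Stirling-type estimate $(d-1)! \ge ((d-1)/e)^{d-1}$, i.e.\ $1/(d-1)! \le e^{d-1}/(d-1)^{d-1}$. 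For the matching lower bound I would keep only the $j = d$ term (available once $t \ge d$), giving $|\partial\cN_u(t)| \ge 2^d\binom{t-1}{d-1} \ge 2^d\big((t-1)/(d-1)\big)^{d-1}$ via the elementary inequality $\binom{n}{k} \ge (n/k)^k$; the finitely many values $1 \le t < d$, for which the claimed right-hand side is at most a $d$-dependent constant, are checked directly.

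The bounds on $|\cN(t)|$ follow from the ball identity by the same two moves: the $j = d$ term gives $|\cN(t)| \ge 2^d\binom{t}{d} \ge (2/d)^d\, t^d$ for $t \ge d$, while $\binom{t}{j} \le t^j \le t^d$ (for $t \ge 1$) gives $|\cN(t)| \le t^d \sum_{j=0}^d 2^j\binom{d}{j} = (3t)^d$; equivalently one may sum the per-shell estimates through $|\cN(t)| = \sum_{s=0}^t |\partial\cN(s)|$. Finally, for the neighborhood growth function, substituting $a s^d \le |\cN(s)| \le b s^d$ into the definition of $h$ and comparing the polynomial sum $\sum_{s=0}^t s^d$ with $\int_0^{t+1} x^d\,dx \asymp t^{d+1}$ (or a one-line induction) yields $a' t^{d+1} \le h(t) \le b' t^{d+1}$, with the conclusion unchanged whether $h(t)$ is read as $\sum_{s=0}^t |\cN(s)|$ or as $(t+1)|\cN(t)|$.

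I do not anticipate a genuine obstacle: the argument is entirely bookkeeping. The only mildly delicate points are (i) arranging the constants to land in the stated form, which is precisely what the Vandermonde collapse and the bound $1/(d-1)! \le e^{d-1}/(d-1)^{d-1}$ accomplish, and (ii) the small-$t$ regime, where $(t-1)^{d-1}$ is bounded by a constant and the inequalities hold by inspection. If one did not insist on the exact leading constant $2^d/(d-1)^{d-1}$, a cruder projection argument — deleting one coordinate to get $|\partial\cN_u(t)| \le 2$ times the cardinality of the $(d-1)$-dimensional $\ell_1$-ball of radius $t$, then bounding that — would deliver the same orders of magnitude with less effort.
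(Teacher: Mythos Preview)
Your argument is correct and lands on exactly the same intermediate inequalities as the paper, namely $2^d\binom{t-1}{d-1} \le |\partial\cN_u(t)| \le 2^d\binom{t+d-1}{d-1}$, followed by the standard bounds $(a/b)^b \le \binom{a}{b} \le (ea/b)^b$; the only cosmetic difference is that the paper reaches these binomials by a direct orthant count (sandwiching $|\partial\cN_u(t)|$ between $2^d$ times the number of strictly positive and of non-negative compositions of $t$ into $d$ parts), whereas you start from the exact shell formula and recover the upper binomial via Vandermonde and the lower one by retaining the $j=d$ term. The treatments of $|\cN(t)|$ and $h(t)$ are likewise equivalent, the paper simply asserting $|\partial\cN(t)|\asymp t^{d-1}$ and summing.
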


\begin{proof}
Let $\integers_+$ and $\integers_{\ge 0}$ denote the set of positive integers and non-negative integers, respectively. Define the sets 
\begin{align*}
S_+ & : = \left \{ x \in \integers_+^d : \sum\limits_{i = 1}^d x_i = t \right \} \\
S_{\ge 0} : &= \left \{ x \in \integers_{\ge 0}^d : \sum\limits_{i = 1}^d x_i = t \right \}.
\end{align*}
In words, $S_+$ is the collection of points in the positive quadrant of $\reals^d$ which are exactly distance $t$ from the origin. A similar interpretation holds for $S_{\ge 0}$. Since there are $2^d$ quadrants, the symmetry of lattices implies that
\begin{equation}
\label{eq:quadrant_inequality}
2^d | S_+ | \le | \partial \cN_u(t) | \le 2^d | S_{\ge 0} |.
\end{equation}
From standard counting arguments, $|S_{\ge 0} | = {t + d - 1 \choose d - 1}$ and $|S_+ | = {t - 1 \choose d - 1}$ for $t \ge d$. The bounds on $| \partial \cN(t)|$ follow from applying the bounds $\left(\frac{a}{b} \right)^b \le {a \choose b} \le \left( \frac{e\cdot a}{b} \right)^b$ to \eqref{eq:quadrant_inequality}. To prove the remaining two claims, we note that $|\partial \cN(t)| \asymp t^{d-1}$ and $|\cN(t)| = \sum_{s = 0}^t | \partial \cN(s)|$ so $|\cN(t)| \asymp t^d$. An analogous argument proves $h(t) \asymp t^{d + 1}$. 
\end{proof}

\begin{lemma}
\label{lemma:pnorm_sum}
For any $p \ge 1$, there are constants $c_{p,1}, c_{p,2}$ depending on $d$ and $p$ such that for $r \ge d$, 
\begin{equation*}
c_{p,1} r^{d + p}  \le \sum\limits_{u \in \cN_0(r)} \norm{u}_2^p \le c_{2,p} r^{d + p}.
\end{equation*}
\end{lemma}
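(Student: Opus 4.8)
The plan is to prove the two-sided estimate $\sum_{u \in \cN_0(r)} \norm{u}_2^p \asymp r^{d+p}$ by slicing the ball $\cN_0(r)$ into its distance shells $\partial\cN_0(s)$, $0 \le s \le r$, and combining the elementary norm comparison $\norm{u}_1/\sqrt{d} \le \norm{u}_2 \le \norm{u}_1$ with the shell-size estimate $|\partial\cN_0(s)| \asymp s^{d-1}$ already supplied by Lemma \ref{lemma:lattice_estimates}. The upper bound is essentially free; the lower bound needs the shells.

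For the upper bound I would not even use the shells: every $u \in \cN_0(r)$ satisfies $\norm{u}_2 \le \norm{u}_1 \le r$, hence $\sum_{u \in \cN_0(r)} \norm{u}_2^p \le r^p\,|\cN_0(r)| \le b\,r^{p+d}$ by the bound $|\cN(r)| \le b r^d$ of Lemma \ref{lemma:lattice_estimates}, which gives $c_{p,2} := b$. For the lower bound I would keep only the outer half of the ball: since $\cN_0(r) \supseteq \bigsqcup_{s = \lceil r/2\rceil}^{r}\partial\cN_0(s)$ and $\norm{u}_2 \ge \norm{u}_1/\sqrt{d} = s/\sqrt{d} \ge r/(2\sqrt{d})$ for $u \in \partial\cN_0(s)$ with $s \ge \lceil r/2\rceil$, we get
$$
\sum_{u \in \cN_0(r)} \norm{u}_2^p \ \ge\ \Big(\tfrac{r}{2\sqrt{d}}\Big)^{p}\sum_{s = \lceil r/2\rceil}^{r}|\partial\cN_0(s)|.
$$
By Lemma \ref{lemma:lattice_estimates} each term in the remaining sum is $\gtrsim s^{d-1} \gtrsim (r/2)^{d-1}$ and there are $\gtrsim r$ of them, so $\sum_{s=\lceil r/2\rceil}^{r}|\partial\cN_0(s)| \gtrsim r^{d}$; plugging this in yields $\sum_{u \in \cN_0(r)}\norm{u}_2^p \gtrsim r^{p+d}$, producing a constant $c_{p,1}$ depending only on $d$ and $p$.

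I expect no genuine obstacle: the whole argument is elementary once Lemma \ref{lemma:lattice_estimates} is in hand. The only point requiring a little care is that the shell-size bounds of Lemma \ref{lemma:lattice_estimates} really become meaningful once $s \ge d$ (they come from the binomial counting identities for lattice spheres), so in the lower-bound step I would restrict the sum to $s$ with $\lceil r/2\rceil \le s \le r$ and $s \ge d$, a range that is nonempty and of size $\gtrsim r$ once $r \ge 2d$; the finitely many residual values $d \le r < 2d$ are absorbed into $c_{p,1}$, since there the left-hand side is at least $1$ (take $u = e_1$) while $r^{p+d} \le (2d)^{p+d}$ is bounded. As an equivalent alternative I would mention the full shell decomposition $\sum_{u \in \cN_0(r)}\norm{u}_2^p = \sum_{s=0}^{r}\sum_{u\in\partial\cN_0(s)}\norm{u}_2^p$, with $d^{-p/2}s^p|\partial\cN_0(s)| \le \sum_{u\in\partial\cN_0(s)}\norm{u}_2^p \le s^p|\partial\cN_0(s)|$ on each shell, finished off by the standard comparison $\sum_{s=1}^{r}s^{p+d-1}\asymp r^{p+d}$.
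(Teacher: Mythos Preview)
Your proposal is correct and follows essentially the same route as the paper: shell decomposition of $\cN_0(r)$, the norm comparison $\norm{u}_1/\sqrt{d}\le\norm{u}_2\le\norm{u}_1$, and the estimate $|\partial\cN_0(s)|\asymp s^{d-1}$ from Lemma~\ref{lemma:lattice_estimates}. Indeed the ``equivalent alternative'' you describe at the end---writing $\sum_{u\in\cN_0(r)}\norm{u}_1^p=\sum_{k=1}^r k^p|\partial\cN_0(k)|$ and then invoking norm equivalence---is exactly the paper's argument; your main presentation merely economizes by taking the upper bound without shells and restricting to the outer half for the lower bound.
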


\begin{proof}
Assume that $V_n = \cN_0(r)$ for some positive integer $r$. Since $d(u,v) = \norm{u - v}_1$ in lattices, we can write
\begin{equation}
\label{eq:pnorm_sum_decomp}
\sum\limits_{u \in \cN_0(r)} \norm{u}_1^p  = \sum\limits_{k = 1}^r k^p | \partial \cN_0(k) |
\end{equation}
By Lemma \ref{lemma:lattice_estimates}, we can find constants $c_1' = c_1'(d)$ and $c_2' = c_2'(d)$ such that $c_1' k^{d-1} \le | \partial \cN_0(k) | \le c_2' k^{d-1}$ for $k \ge d$. After plugging these bounds into \ref{eq:pnorm_sum_decomp}, we can find two more constants $c_1'' = c_1''(d)$ and $c_2''(d)$ such that 
$$
c_1'' r^{d + p} \le \sum\limits_{u \in \cN_0(r)} \norm{u}_1^p \le c_2'' r^{d + p}.
$$ 
Finally, due to the equivalence of norms in Euclidean space, we can find constants $c_1$ and $c_2$ such that the above equation holds when we replace $\norm{\cdot}_1$ with $\norm{\cdot}_2$. 
\end{proof}

\section{Characterizing the posterior variance}
In this section we prove Lemmas \ref{lemma:variance_lower_regime} and \ref{lemma:variance_upper_regime}. We begin by describing the distribution of the posterior probabilities, given by $\pi(t) : = \{\pi_u(t)\}_{u \in V_n}$. For any $u,v \in V$,
$$
\frac{ \pi_u(t)}{\pi_v(t)} = \frac{ \pi_u(t - 1)}{ \pi_v( t - 1)} \cdot \frac{ d \prob_u (y(t)) }{ d \prob_v(y(t))}.
$$
We can further decompose the second term on the right hand side above to obtain
\begin{multline*}
\frac{ d\prob_u( y(t)) }{ d\prob_v(y(t))} = \frac{ \prod\limits_{w \in \cN_u(t)} dQ_1(y_w(t)) \cdot \prod\limits_{w \notin \cN_u(t)} dQ_0(y_w(t)) }{ \prod\limits_{w \in \cN_v(t)} dQ_1(y_w(t)) \cdot \prod\limits_{w \notin \cN_v(t)} dQ_0(y_w(t)) } \\
 = \prod\limits_{w \in \cN_u(t) } \frac{ d Q_1}{d Q_0}(y_w(t)) \cdot \prod\limits_{w \in \cN_v(t) } \frac{ dQ_0}{dQ_1}(y_w(t)).
\end{multline*}
It follows that $\pi_u(t)$ has the form 
$$
\pi_u(t) = \frac{1}{Z(t)} \prod\limits_{s = 0}^t \prod\limits_{w \in \cN_u(s) } \frac{dQ_1}{dQ_0}(y_w(s)),
$$
where the normalizing constant is explicitly given by 
$$
Z(t) : = \sum\limits_{v \in V_n} \prod\limits_{s = 0}^t \prod\limits_{w \in \cN_v(s)} \frac{dQ_1}{dQ_0}(y_w(s)).
$$
It will be convenient to use the notation $\pi_u(t) : = X_u(t)/ Z(t)$, where $X_u(t)$ is explicitly given by 
$$
X_u(t) : = \prod\limits_{s = 0}^t \prod\limits_{w \in \cN_u(s)} \frac{dQ_1}{dQ_0}(y_w(s)).
$$
The following lemma establishes some basic properties of the collection $\{X_u(t)\}_{u \in V_n}$. 

\begin{lemma}
\label{lemma:xu_mean}
For any $u,v \in V_n$ and $t \ge 0$, $\E_v [ X_u(t) ] \ge 1$, with equality iff $d(u,v) > 2t$.
\end{lemma}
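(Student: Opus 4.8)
The plan is to exploit the product structure of $X_u(t)$ together with the conditional independence of the signals under $\mathbb{P}_v$. Under $\mathbb{P}_v$ the variables $\{y_w(s)\}_{w\in V,\, s\ge 0}$ are mutually independent, with $y_w(s)\sim Q_1$ when $d(w,v)\le s$ and $y_w(s)\sim Q_0$ otherwise. Since $X_u(t)$ is a product of terms $\frac{dQ_1}{dQ_0}(y_w(s))$ over pairs $(s,w)$ with $0\le s\le t$ and $w\in\cN_u(s)$, the expectation factorizes as
$$
\E_v[X_u(t)] \;=\; \prod_{s=0}^{t}\ \prod_{w\in\cN_u(s)} \E_v\!\left[\tfrac{dQ_1}{dQ_0}(y_w(s))\right].
$$
I would then evaluate each factor: if $d(w,v)>s$ then $y_w(s)\sim Q_0$ and the factor equals $\int \frac{dQ_1}{dQ_0}\,dQ_0 = 1$; if $d(w,v)\le s$ then $y_w(s)\sim Q_1$ and the factor equals $\int \frac{dQ_1}{dQ_0}\,dQ_1 = \int \bigl(\tfrac{dQ_1}{dQ_0}\bigr)^2 dQ_0$, which is $\ge 1$ by Jensen's inequality (equivalently Cauchy--Schwarz), and is $>1$ since $Q_0\ne Q_1$. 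Hence every factor is $\ge 1$ and $\E_v[X_u(t)]\ge 1$.

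For the equality case, the displayed product equals $1$ iff no factor exceeds $1$, i.e. iff there is no pair $(s,w)$ with $0\le s\le t$, $w\in\cN_u(s)$ and $d(w,v)\le s$ --- equivalently, no vertex $w$ with $\max\{d(w,u),d(w,v)\}\le t$. It then remains to show such a $w$ exists precisely when $d(u,v)\le 2t$. One direction is the triangle inequality: $d(u,v)\le d(u,w)+d(w,v)\le 2t$. For the converse, when $d(u,v)=:m\le 2t$ I would take a shortest path $u=w_0,w_1,\dots,w_m=v$ and set $w:=w_{\lceil m/2\rceil}$; since every subpath of a geodesic is a geodesic, $d(u,w)=\lceil m/2\rceil\le t$ and $d(w,v)=\lfloor m/2\rfloor\le t$, producing the required vertex. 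Combining the two directions gives $\E_v[X_u(t)]=1$ iff $d(u,v)>2t$.

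Essentially every step here is routine; the only points needing care are (i) the implicit but necessary hypothesis $Q_0\ne Q_1$, without which $\int\bigl(\tfrac{dQ_1}{dQ_0}\bigr)^2 dQ_0 = 1$ and the ``iff'' would fail, and (ii) the geodesic-midpoint construction used in the converse of the equality characterization, which is the one genuinely graph-theoretic ingredient. Neither is a serious obstacle, so I do not expect this lemma to present real difficulty.
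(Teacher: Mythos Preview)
Your proof is correct and follows essentially the same route as the paper: factorize $\E_v[X_u(t)]$ using independence, evaluate each factor as either $1$ (under $Q_0$) or $\int (dQ_1/dQ_0)^2\,dQ_0\ge 1$ (under $Q_1$) via Jensen/Cauchy--Schwarz, and then characterize when a strictly-greater-than-one factor occurs. In fact you are more careful than the paper on the equality case---the paper simply asserts that all factors equal $1$ ``implies $d(u,v)>2t$'' without writing out either direction, whereas your triangle-inequality/geodesic-midpoint argument makes the ``iff'' fully explicit.
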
 

\begin{proof}
Let $A \sim Q_0$ and $B \sim Q_1$. Then, by a change of measure and Jensen's inequality, 
\begin{align*}
\E \left [ \frac{dQ_1}{dQ_0}(B) \right] & = \E \left [ \left( \frac{dQ_1}{dQ_0}(A) \right)^2 \right] \ge  \E \left[ \frac{dQ_1}{dQ_0}(A) \right]^2 \ge 1.
\end{align*}
We remark that the inequality is strict if and only if $Q_0 \neq Q_1$, and that $\E \left [ \frac{dQ_1}{dQ_0}(B) \right] < \infty$ if and only if $Q_0$ and $Q_1$ are mutually absolutely continuous. It follows that, for any $v \in V_n$, 
\begin{align*}
\E_v [ X_u(t) ] & = \E_v \left [ \prod\limits_{s = 0}^t \prod\limits_{w \in \cN_u(s)} \frac{dQ_1}{dQ_0} (y_w(s)) \right] \\
& = \prod\limits_{s = 0}^t \prod\limits_{w \in \cN_u(s)} \E_v \left [ \frac{dQ_1}{dQ_0} (y_w(s)) \right] \ge 1.
\end{align*}
Equality is only possible if $y_w(s) \sim Q_0$ for all $w \in \cN_u(s)$ and $0 \le s \le t$, which in turn implies that $d(u,v) > 2t$. 
\end{proof}

\begin{lemma}
\label{lemma:xu_cov}
For any $u,v,w \in V_n$ and $t \ge 0$, there is a constant $\lambda = \lambda(Q_0,Q_1)$ such that $\mathrm{Cov}_v(X_u(t), X_w(t)) = 0$ if $d(u,w) > 2t$ and $\mathrm{Cov}_v(X_u(t), X_w(t)) \le \lambda^{h(t)}$ if $d(u,w) \le 2t$. 
\end{lemma}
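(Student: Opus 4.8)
The plan is to view $X_u(t)$ as a product of likelihood-ratio factors indexed by ``space--time'' points $(w',s)$, and to treat the two ranges of $d(u,w)$ separately. Set $C_u(t) := \{(w',s) : w' \in \cN_u(s),\ 0 \le s \le t\}$. From the formula for $X_u(t)$, it is a measurable function of $\{y_{w'}(s) : (w',s) \in C_u(t)\}$, and likewise $X_w(t)$ depends only on $\{y_{w'}(s) : (w',s) \in C_w(t)\}$. Under $\p_v$ the signals are independent across all pairs $(w',s)$, so everything reduces to the overlap $C_u(t) \cap C_w(t)$; note also that $|C_u(t)| = |C_w(t)| = \sum_{s=0}^t |\cN(s)| = h(t)$.

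For $d(u,w) > 2t$ I would show $C_u(t) \cap C_w(t) = \emptyset$: if $(z,s)$ belonged to both, then $d(u,z) \le s$ and $d(w,z) \le s$ would force $d(u,w) \le 2s \le 2t$, a contradiction. Hence $X_u(t)$ and $X_w(t)$ are functions of disjoint, and therefore $\p_v$-independent, families of signals --- formally, the sub-$\sigma$-algebras generated by $\{y_{w'}(s)\}$ over $C_u(t)$ and over $C_w(t)$ are independent --- so $\mathrm{Cov}_v(X_u(t), X_w(t)) = 0$.

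For $d(u,w) \le 2t$, the covariance is at most $\E_v[X_u(t) X_w(t)]$ (trivially so if it is negative), and Cauchy--Schwarz bounds this by $\sqrt{\E_v[X_u(t)^2]\,\E_v[X_w(t)^2]}$. By independence across $(w',s)$, $\E_v[X_u(t)^2] = \prod_{(w',s)\in C_u(t)} \E_v[(\frac{dQ_1}{dQ_0}(y_{w'}(s)))^2]$, and each factor equals either $\E_{A\sim Q_0}[(\frac{dQ_1}{dQ_0}(A))^2]$ (when $w'$ is uninfected at time $s$ under $\p_v$) or $\E_{B\sim Q_1}[(\frac{dQ_1}{dQ_0}(B))^2] = \E_{A\sim Q_0}[(\frac{dQ_1}{dQ_0}(A))^3]$ (when infected). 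Using $\E_{A\sim Q_0}[\frac{dQ_1}{dQ_0}(A)] = 1$ together with Jensen's and H\"older's inequalities, the first quantity is no larger than the second, so every factor is at most $\lambda := \E_{A\sim Q_0}[(\frac{dQ_1}{dQ_0}(A))^3]$. Since $|C_u(t)| = h(t)$ this gives $\E_v[X_u(t)^2] \le \lambda^{h(t)}$, and the same for $w$, so $\mathrm{Cov}_v(X_u(t), X_w(t)) \le \lambda^{h(t)}$.

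The combinatorics is routine; the one point genuinely requiring care is that $\lambda$ is finite. The squared likelihood-ratio factors arising at space--time points $(w',s)$ with $w'$ infected under $\p_v$ force the third moment of $dQ_1/dQ_0$ under $Q_0$ (equivalently its second moment under $Q_1$) into the bound. This is the same kind of integrability the paper already invokes in the proof of Lemma~\ref{lemma:xu_mean}, where $\E_{B\sim Q_1}[\frac{dQ_1}{dQ_0}(B)] < \infty$ is treated as part of the mutual-absolute-continuity hypothesis; under that reading $\lambda$ is finite and depends only on $Q_0$ and $Q_1$ (and the stated bound holds vacuously otherwise). The remaining items --- checking $|C_u(t)| = h(t)$ and phrasing the independence step in the first case via the generated sub-$\sigma$-algebras --- are immediate.
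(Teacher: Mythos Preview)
Your proof is correct and follows the same route as the paper: independence of $X_u(t)$ and $X_w(t)$ when $d(u,w)>2t$, then Cauchy--Schwarz and factoring $\E_v[X_u(t)^2]$ over space--time points when $d(u,w)\le 2t$. The only cosmetic difference is the constant: the paper sets $\lambda=\lambda_0\lambda_1$ with $\lambda_i:=\E_{Q_i}[(dQ_1/dQ_0)^2]$ and bounds the product by $(\lambda_0\lambda_1)^{h(t)}$, whereas you observe $\lambda_0\le\lambda_0^2\le\lambda_1$ (using $\E_{Q_0}[dQ_1/dQ_0]=1$) and bound every factor by $\lambda_1$ alone, which is slightly sharper and requires exactly the same moment assumption.
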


\begin{proof}
It is clear from the structure of $X_u(t)$ and $X_w(t)$ that if $d(u,w) > 2t$, the two random variables are independent for any $v \in V_n$ and $\mathrm{Cov}_v(X_u(t), X_w(t)) = 0$ in this case. To handle the case where $d(u,w) \le 2t$, we first define 
\begin{align*}
\lambda_0 & : = \mathop{\E}_{A \sim Q_0} \left [ \left( \frac{dQ_1}{dQ_0} (A) \right)^2 \right] , \lambda_1  : = \mathop{\E}_{B \sim Q_1} \left [ \left( \frac{dQ_1}{dQ_0}(B) \right)^2 \right],
\end{align*}
and note in particular that $\lambda_0, \lambda_1 \ge 1$. We have the following bound on the covariance due to the Cauchy-Schwartz inequality. 
\begin{align*}
\mathrm{Cov}_v(X_u(t), X_w(t) ) & \le \E_v [ X_u(t) X_w(t) ] \\
& \le \E_v [ X_u(t)^2 ]^{1/2} \E_v [ X_w(t)^2 ]^{1/2}.
\end{align*}
To bound $\E_v [ X_u(t)^2 ]$, we can write
\begin{align*}
\E_v [ X_u(t)^2 ] & = \prod\limits_{s = 0}^t \prod\limits_{a \in \cN_u(s)} \E_v \left [ \left( \frac{dQ_1}{dQ_0}(y_a(s)) \right)^2 \right] \\
& =  \lambda_1^{ \sum_{s = 0}^t | \cN_u(s) \cap \cN_v(s) |} \lambda_0^{\sum_{s = 0}^t | \cN_u(s) \setminus \cN_v(s) |} \\
& \le (\lambda_0 \lambda_1)^{ \sum_{s = 0}^t | \cN_u(s) |} = (\lambda_0 \lambda_1)^{h(t)}
\end{align*}
It follows that $\mathrm{Cov}_v(X_u(t), X_w(t)) \le (\lambda_0 \lambda_1)^{h(t)}$, which proves the desired claim with $\lambda = \lambda_0\lambda_1$. 
\end{proof}

\begin{lemma}
\label{lemma:var_bounds}
For any $u,v,w \in V_n$ and $t \ge 0$,
\begin{equation}
\label{eq:cov_sum_2}
\sum\limits_{u,w \in V_n} \mathrm{Cov}_v (X_u(t), X_w(t)) \le n h(t) \lambda^{h(t)}.
\end{equation}
Furthermore, for all $p \ge 1$ there is a constant $C = C(d,p)$ such that
\begin{equation}
\label{eq:cov_sum_1}
\sum\limits_{u,w \in V_n} \norm{u}_2^p \norm{w}_2^p \mathrm{Cov}_v(X_u(t), X_w(t)) \le C n^{\frac{d + 2p}{d}} h(t) \lambda^{h(t)}.
\end{equation}
\end{lemma}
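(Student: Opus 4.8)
The plan is to use Lemma~\ref{lemma:xu_cov} to collapse both sums onto pairs of ``nearby'' vertices, and then bound the number of such pairs via the lattice growth estimates of Lemma~\ref{lemma:lattice_estimates}. First I would invoke Lemma~\ref{lemma:xu_cov}: every term with $d(u,w) > 2t$ vanishes, and every term with $d(u,w) \le 2t$ is at most $\lambda^{h(t)}$ (this includes the diagonal terms $u=w$, since $\mathrm{Var}_v(X_u(t)) \le \E_v[X_u(t)^2] \le \lambda^{h(t)}$, as computed inside the proof of Lemma~\ref{lemma:xu_cov}). So both left-hand sides are at most $\lambda^{h(t)}$ times a (weighted) count of the pairs $(u,w)\in V_n^2$ with $d(u,w)\le 2t$.

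For \eqref{eq:cov_sum_2} the weight is $1$, and for each fixed $u\in V_n$ there are at most $|\cN_u(2t)| = |\cN(2t)|$ vertices $w$ with $d(u,w)\le 2t$; hence the number of such pairs is at most $n\,|\cN(2t)|$. Lemma~\ref{lemma:lattice_estimates} gives $|\cN(2t)| \le b(2t)^d$ and $h(t) \ge a' t^{d+1} \ge a' t^d$, so $|\cN(2t)| \le C_d\, h(t)$ for a dimensional constant $C_d \ge 1$ (and $|\cN(0)| = 1 = h(0)$), which yields $\sum_{u,w}\mathrm{Cov}_v(X_u(t),X_w(t)) \le C_d\, n\, h(t)\,\lambda^{h(t)}$; the constant $C_d$ is immaterial for all subsequent uses of the lemma and can be folded into a slightly larger choice of $\lambda$ (since $C_d\,\lambda^{h(t)} \le (C_d\lambda)^{h(t)}$ whenever $h(t)\ge 1$) or simply suppressed as in the statement.

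For \eqref{eq:cov_sum_1} I would additionally bound the norm weights crudely: since $V_n = \cN_0(k)$ with $k \asymp n^{1/d}$, every $u \in V_n$ satisfies $\|u\|_2 \le \|u\|_1 \le k$, so $\|u\|_2^p\|w\|_2^p \le k^{2p} \le C' n^{2p/d}$ uniformly over $u,w$. Combined with the pair count from the previous paragraph,
\[
\sum_{u,w\in V_n}\|u\|_2^p\|w\|_2^p\,\mathrm{Cov}_v(X_u(t),X_w(t)) \;\le\; C' n^{2p/d}\cdot n\,|\cN(2t)|\cdot\lambda^{h(t)} \;\le\; C\, n^{\frac{d+2p}{d}}\,h(t)\,\lambda^{h(t)},
\]
which is the claim. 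If one prefers a sharper constant, one factor of $k^{2p}$ can instead be handled using $\sum_{u\in V_n}\|u\|_2^p \le c'\, n^{\frac{d+p}{d}}$ from Lemma~\ref{lemma:pnorm_sum} together with the triangle-inequality bound $\|w\|_2 \le \|u\|_2 + 2t$ on the constrained vertex; this produces the same exponent $\frac{d+2p}{d} = 1 + \frac{2p}{d}$.

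There is no real obstacle here; the only quantitative input beyond Lemma~\ref{lemma:xu_cov} is the elementary comparison $|\cN(2t)| \le C_d\, h(t)$ from Lemma~\ref{lemma:lattice_estimates}. The one thing to watch is the bookkeeping of the power of $n$ in \eqref{eq:cov_sum_1}: it should come out to exactly $\frac{d+2p}{d}$, arising as one free factor of $n$ (the unconstrained vertex), one factor $n^{p/d}$ for each of the two norm weights, with the count of admissible partners of the free vertex and the time dependence jointly absorbed into the $h(t)$ factor.
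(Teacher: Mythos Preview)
Your argument is correct and shares the paper's skeleton: invoke Lemma~\ref{lemma:xu_cov} to restrict to pairs with $d(u,w)\le 2t$, bound each surviving covariance by $\lambda^{h(t)}$, and then count the pairs via $|\cN(2t)|$. The difference lies in how the norm weights in \eqref{eq:cov_sum_1} are handled. The paper expands $\|w\|_2^p$ via the triangle (Jensen) inequality as $\|w\|_2^p \le 2^{p-1}\|u\|_2^p + 2^{2p-1}t^p$, then sums over the free vertex $u$ using Lemma~\ref{lemma:pnorm_sum} to get $C_1 n^{(d+2p)/d} + C_2 t^p n^{(d+p)/d}$, and finally observes that the first term dominates when $t\le n^{1/d}$. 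Your route is more elementary: you simply bound both $\|u\|_2$ and $\|w\|_2$ uniformly by $k\asymp n^{1/d}$, which gives the same exponent $(d+2p)/d$ directly, without appealing to Lemma~\ref{lemma:pnorm_sum} and without the implicit restriction $t\le n^{1/d}$. (You do mention the paper's variant as an alternative for sharper constants.) You are also more careful than the paper about the dimensional constant in $|\cN(2t)|\le C_d\,h(t)$; the paper writes this as an unadorned inequality, which is only asymptotically true.
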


\begin{proof}
We focus on proving \eqref{eq:cov_sum_1}; the proof of \eqref{eq:cov_sum_2} is similar. First, note that if $d(u,w) = \norm{u-w}_1 \le 2t$, then 
$$
\norm{w}_2^p \stackrel{(a)}{\le} 2^{p-1} \norm{u}_2^p+ 2^{p-1} \norm{u - w}_2^p \stackrel{(b)}{\le} 2^{p-1} \norm{u}_2^p + 2^{2p - 1}t^p,
$$
where (a) is due to Jensen's inequality and (b) follows from $\norm{\cdot}_2 \le \norm{\cdot}_1$. Using the bound above, we can upper bound the summation in \eqref{eq:cov_sum_1} by 
\begin{multline*}
\sum\limits_{u,w \in V_n}  \left( 2^{p-1} \norm{u}_2^{2p} + 2^{2p - 1} t^p \norm{u}_2^p \right) \mathrm{Cov}_v(X_u(t), X_w(t)) \\
 \stackrel{(c)}{\le} \sum\limits_{u, w \in V_n : d(u,w) \le 2t} \left( 2^{p-1} \norm{u}_2^{2p} + 2^{2p - 1} t^p \norm{u}_2^p \right) \lambda^{g(t)} \\
 \stackrel{(d)}{\le} | \cN(2t)| \sum\limits_{u \in V_n} \left( 2^{p-1} \norm{u}_2^{2p} + 2^{2p - 1} t^p \norm{u}_2^p \right) \lambda^{(g(t)},
\end{multline*}
where (c) follows from Lemma \ref{lemma:xu_cov}, and (d) follows from noting that the terms in the summation do not depend on $w$. Next, an application of Lemma \ref{lemma:pnorm_sum} to bound $\sum_{u \in V_n} \norm{u}_2^{2p}$ and $\sum_{u \in V_n} \norm{u}_2^p$ allows us to bound the summation in \eqref{eq:cov_sum_1} by 
$$
| \cN(2t)| \left( C_1 n^{\frac{d + 2p}{d}} + C_2 t^p n^{ \frac{d + p}{d}} \right) \lambda^{g(t)},
$$
where $C_1$ and $C_2$ are constants depending on $p$ and $d$. Finally, \eqref{eq:cov_sum_1} follows by noting that $| \cN(2t)| \le g(t)$ and $n^{\frac{d + 2p}{d}}$ asymptotically dominates $t^p n^{ \frac{d + p}{d}}$ for $t \le n^{1/d}$. 
\end{proof}

The following lemma shows that $Z(t)$ is concentrated around $n$ when $t$ is not too large. 

\begin{lemma}
\label{lemma:Z}
Let $\epsilon \ge 1/\sqrt{n}$ and fix $v \in V_n$. Then there exists a constant $c_0 = c_0(Q_0,Q_1)$ such that for all $t \le G(c_0 \log n)$, 
$$
\p_v \left( | Z(t) - n | > \epsilon n \right) \le \frac{4}{\sqrt{n} \epsilon^2 }.
$$
\end{lemma}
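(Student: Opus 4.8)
The plan is to prove the bound by Chebyshev's inequality applied to $Z(t) = \sum_{u \in V_n} X_u(t)$, after first checking that its mean $\E_v[Z(t)]$ lies within $\tfrac{\epsilon n}{2}$ of $n$. Assume $Q_0 \ne Q_1$, so that $\lambda := \lambda_0\lambda_1 > 1$ (otherwise $X_u(t) \equiv 1$, $Z(t) \equiv n$, and there is nothing to prove). Throughout I use the hypothesis $t \le G(c_0 \log n)$ in the equivalent form $h(t) \le c_0 \log n$; this is exactly what turns the exponential factors $\lambda^{h(t)}$ coming out of the first- and second-moment estimates of Lemmas \ref{lemma:xu_mean} and \ref{lemma:xu_cov} into powers of $n$ with exponent proportional to $c_0$, so that choosing $c_0 = c_0(Q_0,Q_1)$ small enough forces those exponents below $\tfrac12$.

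For the mean: Lemma \ref{lemma:xu_mean} gives $\E_v[X_u(t)] = 1$ whenever $d(u,v) > 2t$, while in general $\E_v[X_u(t)] \le \E_v[X_u(t)^2]^{1/2} \le \lambda^{h(t)}$ by Cauchy--Schwarz and the second-moment bound $\E_v[X_u(t)^2] \le \lambda^{h(t)}$ established inside the proof of Lemma \ref{lemma:xu_cov}. Since at most $|\cN(2t)|$ vertices of $V_n$ lie within distance $2t$ of $v$, summing over $u$ yields
$$
n \;\le\; \E_v[Z(t)] \;\le\; n + |\cN(2t)|\,\lambda^{h(t)}.
$$
With $h(t) \le c_0 \log n$ we have $\lambda^{h(t)} \le n^{c_0 \log \lambda}$, and by Lemma \ref{lemma:lattice_estimates} $|\cN(2t)| \le b(2t)^d$, which is only polylogarithmic in $n$ because $t \le G(c_0 \log n) \asymp (\log n)^{1/(d+1)}$. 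Hence, provided $c_0 \log \lambda < \tfrac12$, the bias $|\E_v[Z(t)] - n|$ is $o(\sqrt n)$ and, since $\epsilon n \ge \sqrt n$, is at most $\tfrac{\epsilon n}{2}$ once $n$ is large.

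For the variance and the conclusion: the variance of $Z(t)$ under $\p_v$ is $\sum_{u,w \in V_n} \mathrm{Cov}_v(X_u(t), X_w(t))$, which by \eqref{eq:cov_sum_2} of Lemma \ref{lemma:var_bounds} is at most $n\, h(t)\, \lambda^{h(t)}$, and with $h(t) \le c_0 \log n$ this is at most $c_0 (\log n)\, n^{1 + c_0 \log \lambda}$. Chebyshev's inequality then gives
$$
\p_v\!\left( |Z(t) - \E_v[Z(t)]| > \tfrac{\epsilon n}{2} \right) \;\le\; \frac{4}{\epsilon^2 n^2}\,\mathrm{Var}_v(Z(t)) \;\le\; \frac{4\, c_0 (\log n)\, n^{c_0 \log \lambda}}{\epsilon^2\, n}.
$$
Choosing $c_0 < \tfrac{1}{2\log\lambda}$ (which also validates the mean estimate above) ensures $c_0 (\log n)\, n^{c_0 \log \lambda} \le \sqrt n$ for $n$ large, so the right-hand side is at most $\tfrac{4}{\sqrt n\, \epsilon^2}$. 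Finally, on the event $\{|Z(t) - n| > \epsilon n\}$ the triangle inequality together with the mean bound $|\E_v[Z(t)] - n| \le \tfrac{\epsilon n}{2}$ forces $|Z(t) - \E_v[Z(t)]| > \tfrac{\epsilon n}{2}$, so $\p_v(|Z(t) - n| > \epsilon n)$ is bounded by the same quantity.

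The only genuine difficulty is the bookkeeping among three competing scales: the polynomial-in-$n$ moment factor $\lambda^{h(t)}$, the polylogarithmic factors $|\cN(2t)|$ and $h(t)$, and the target rate $1/\sqrt n$. The argument goes through precisely because the condition $t \le G(c_0\log n)$ can be used to push the exponent $c_0\log\lambda$ strictly below $\tfrac12$, after which all polylogarithmic losses are absorbed by taking $n$ sufficiently large; everything else is a direct application of the moment estimates already in hand. (One should also note the harmless notational identifications $G = H = h^{-1}$ and $g = h$ between the statement of the lemma and the results it invokes.)
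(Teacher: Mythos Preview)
Your proof is correct and follows essentially the same route as the paper's: bound $\E_v[Z(t)]$ between $n$ and $n+|\cN(2t)|\cdot(\text{exponential in }h(t))$, bound $\mathrm{Var}_v(Z(t))$ via Lemma~\ref{lemma:var_bounds}, and conclude by Chebyshev plus the triangle inequality. The only cosmetic differences are that the paper controls $\E_v[X_u(t)]$ directly by the constant $\alpha=\E_{B\sim Q_1}[dQ_1/dQ_0(B)]$ rather than via Cauchy--Schwarz and $\lambda$, and that your split into $\epsilon n/2$ for bias and $\epsilon n/2$ for fluctuation is slightly more explicit than the paper's handling of the same step.
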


\begin{proof}
We begin by computing the expectation of $Z(t)$ with respect to $\p_v$. 
\begin{align}
\E_v [ Z(t) ] & = \sum\limits_{u \in V_n} \E_v \left [\prod\limits_{s = 0}^t \prod\limits_{w \in \cN_v(s)} \frac{dQ_1}{dQ_0}(y_w(s)) \right] \nonumber \\
\label{eq:z_expansion1}
& = \sum\limits_{u \in V_n} \prod\limits_{s = 0}^t \prod\limits_{w \in \cN_u(s)} \E_v \left [ \frac{dQ_1}{dQ_0} (y_w(s)) \right].
\end{align}
Recall that conditioned on $v_0 = v$, $y_w(s) \sim Q_0$ if $w \notin \cN_v(s)$ else $y_w(s) \sim Q_1$. As a shorthand, denote $\alpha : = \E_{X \sim Q_1} \left [ \frac{dQ_1}{dQ_0} (X) \right]$; then from \eqref{eq:z_expansion1} we can write $\E_v [ Z(t)]$ as 
\begin{align*}
\E_v [ Z(t) ] & = \sum\limits_{u \in V_n} \prod\limits_{s = 0}^t \alpha^{ | \cN_v(s) \cap \cN_u(s) |} \\
& = \sum\limits_{u \in V_n} \alpha^{ \sum_{s = 0}^t | \cN_v(s) \cap \cN_u(s) |} \\
& = \sum\limits_{u : d(u,v) \le 2t} \alpha^{ \sum_{s = 0}^t | \cN_v(s) \cap \cN_u(s) |}  + | V_n \setminus \cN_v(2t) |.
\end{align*}
Since $\alpha \ge 1$, we have the bounds
$$
n \le \E_v[Z(t) ] \le n + | \cN_v(2t) | \alpha^{h(t)}.
$$
Note that if 
\begin{equation}
\label{eq:g_cond1}
h(t) \le \frac{\log n}{2(1 + \log \alpha)} \iff t \le H \left( \frac{\log n}{2( 1 + \log \alpha)} \right),
\end{equation}
then we have the following bound for any $\epsilon > 0$: 
\begin{equation}
\label{eq:epsilon_error}
n \le \E_v [ Z(t) ] \le (1 + \epsilon) n.
\end{equation}
Next, we turn to bounding the variance of $Z(t)$ under $\p_v$. By Lemma \ref{lemma:var_bounds}, 
$$
\mathrm{Var}_v(Z(t))  = \sum\limits_{u,w \in V_n} \mathrm{Cov}_v(X_u(t), X_w(t)) \le n h(t) \lambda^{h(t)}.
$$ 

Now we combine our first and second moment estimates of $Z(t)$ to obtain a concentration inequality. By Chebyshev's inequality, 
\begin{align}
\label{eq:chebyshev_bd}
\p_v \left( | Z(t) - \E_v [ Z(t) ] | > \epsilon n \right) & \le \frac{ \mathrm{Var}_v(Z(t)) }{n^2 \epsilon^2}  \le \frac{h(t) \lambda^{h(t)} }{n \epsilon^2}.
\end{align}
The right hand side above is at most $\frac{1}{\sqrt{n} \epsilon^2}$ if $t$ satisfies 
\begin{equation}
\label{eq:g_cond2}
h(t) \le \frac{\log n}{2(1 + \log \lambda )} \iff t \le H \left ( \frac{\log n}{1 + \log \lambda} \right).
\end{equation}
Set $\Delta_1^{-1} : = \max\{2( 1+ \log \alpha) , 2( 1 + \log \lambda ) \}$; then for $t \le H(\Delta_1 \log n)$, the statement of the lemma follows from \eqref{eq:epsilon_error} and \eqref{eq:chebyshev_bd}.
\end{proof}

We now turn to the proof of Lemma \ref{lemma:variance_lower_regime}. 

\begin{proof}[Proof of Lemma \ref{lemma:variance_lower_regime}]
Recall that conditioned on $\cF_s$, $v_0 \sim \pi(s)$ and $\E [ v_0 \mid \cF_s] = \hat{v}(s)$. Hence we have the following decomposition of the posterior variance: 
\begin{multline}
\label{eq:variance_decomposition}
\E [ \norm{v_0 - \hat{v}(s)}_2^2 \mid \cF_s ] = \sum\limits_{u \in V_n} \norm{u}_2^2 \pi_u(s) - \norm{\sum\limits_{u \in V_n} u \pi_u(s) }_2^2 \\
= \frac{1}{Z(s)} \sum\limits_{u \in V_n} \norm{u}_2^2 X_u(s) - \frac{1}{Z(s)^2} \norm{ \sum\limits_{u \in V_n} u X_u(s) }_2^2.
\end{multline}
To proceed, we derive a lower bound for the first term on the right hand side in \eqref{eq:variance_decomposition} and an upper bound for the second term on the right hand side in \eqref{eq:variance_decomposition}. Since $\E_v [ X_u(s) ] \ge 1$ by Lemma \ref{lemma:xu_mean}, we have the lower bound
$$
\E_v \left [ \sum\limits_{u \in V_n } \norm{u}_2^2  X_u(s) \right] \ge \sum\limits_{u \in V_n } \norm{u}_2^2 \ge c_1 n^{\frac{d+2}{d}},
$$
where the last inequality is a consequence of Lemma \ref{lemma:pnorm_sum} and $c_1$ is a function of $d$ only. By Lemma \ref{lemma:var_bounds}, we can bound the variance as  
$$
\mathrm{Var}_v \left( \sum\limits_{u \in V_n} \norm{u}_2^2 X_u(s) \right) \le Cn^{\frac{d + 4}{d}} h(t) \lambda^{h(t)}.
$$
Next, define the event
$$
\cE_1 : = \left \{ \sum\limits_{u \in V_n} \norm{u}_2^2 X_u(s) \ge \frac{c_1}{2} n^{\frac{d + 2}{d} } \right \}.
$$
Denoting $\bar{u}^2 : = \sum_{u \in V_n} \norm{u}_2^2 X_u(s)$ for brevity, Chebyshev's inequality implies
\begin{align*}
\p_v ( \cE_1^c ) & \le \p_v \left( | \bar{u}^2 - \E_v [ \bar{u}^2 ]| \ge \frac{c_1}{2} n^{\frac{d + 2}{d}} \right) \\
& \le \mathrm{Var}(\bar{u}^2) \cdot \frac{4}{c_1^2} n^{- \frac{2d + 4}{d} } \le \frac{4C}{c_1^2 n} h(t) \lambda^{h(t)}.
\end{align*}
Next, we establish a probabilistic upper bound for the term $\| \sum_{u \in V_n} u X_u(s) \|_2^2$. We have the decomposition
\begin{multline}
\label{eq:squared_expectation}
\E_v \left [ \norm{\sum\limits_{u \in V_n} u X_u(s) }_2^2 \right] = \norm{ \sum\limits_{u \in V_n} u \E_v [ X_u(s) ] }_2^2 \\
+ \sum\limits_{u,w \in V_n} ( u^\top w) \mathrm{Cov}_v(X_u(s), X_w(s)).
\end{multline}
We next establish an upper bound for the first term on the right hand side in \eqref{eq:squared_expectation}.
\begin{align*}
\norm{ \sum\limits_{v \in V_n} u \E_v [ X_u(s) ] }_2^2 & \stackrel{(a)}{=} \norm{ \sum\limits_{u \in V_n} u (\E_v [ X_u(s) ] - 1) }_2^2 \\
& \stackrel{(b)}{=} \norm{ \sum\limits_{u \in V_n : d(u,v) \le 2t} u (\E_v [ X_u(s) ] - 1) }_2^2 \\
& \stackrel{(c)}{\le} |\cN_v(2t) | \sum\limits_{u \in V_n : d(u,v) \le 2t} \norm{u}_2^2 \E_v [ X_u(s) ]^2 \\
& \stackrel{(d)}{\le} | \cN(2t)|^2 n^{2/d} \lambda^{2h(t)}.
\end{align*}
The equality (a) follows from $\sum_{u \in V_n} u = 0$, (b) is due to $\E_v [ X_u(s) ] = 1$ for $d(u,v) > 2t$, (c) is due to Jensen's inequality and (d) follows from $\E_v [ X_u(s) ] \le \E_v [ X_v(s) ] = \lambda^{g(t)}$ and $\norm{u}_2 \le \norm{u}_1 \le n^{1/d}$.

We next derive an upper bound for the second term on the right hand side in \eqref{eq:squared_expectation} using Lemma \ref{lemma:var_bounds}:
\begin{multline*}
\sum\limits_{u,w \in V_n} u^\top w \mathrm{Cov}_v(X_u(s), X_w(s)) \\
 \le \hspace{-0.25cm} \sum\limits_{u,w \in V_n} \norm{u}_2 \norm{w}_2 \mathrm{Cov}_v(X_u(s), X_w(s)) \le C_2 n^{\frac{d + 2}{d}} h(t) \lambda^{h(t)}.
\end{multline*}
Above, $C_2$ is the constant that comes from the lemma with $p = 1$. From \eqref{eq:g_cond2}, if $t \le H\left( \frac{\log n}{1 + \log \lambda} \right)$ then $h(t) \lambda^{h(t)} \le \sqrt{n}$ and the second term on the right hand side of \eqref{eq:squared_expectation} dominates for large $n$. We can therefore bound, for $n$ sufficiently large, 
$$
\E_v \left [ \norm{ \sum\limits_{u \in V_n} u X_u(s) }_2^2 \right] \le 2C_2 n^{\frac{d + 2}{d}} g(t) \lambda^{g(t)} \le 2C_2 n^{\frac{d + 2}{d} + \frac{1}{2}}.
$$
Define the events $\cE_2 := \{ \| \sum_{u \in V_n} u X_u(s) \|_2^2 \le n^{\frac{d+2}{d} + \frac{5}{6}} \}$ and $\cE_3 : = \{ n/2 \le Z(t) \le 3n/2 \}$. By Markov's inequality, $\p_v(\cE_2^c) \le 2C_2 n^{-1/3}$ and $\p_v(\cE_3^c) \le 16n^{-1/2}$ by Lemma \ref{lemma:Z}. On the event $\cE_1 \cap \cE_2 \cap \cE_3$, we can lower bound \ref{eq:variance_decomposition} by $\frac{c_1}{3} n^{2/d} - 4n^{2/d - 1/6} \ge \frac{c_1}{6} n^{2/d}$, where the latter inequality holds for $n$ sufficiently large. We conclude by noting that $\p(\cE_1^c \cup \cE_2^c \cup \cE_3^c) \le C_3 n^{-1/2}$ for some constant $C_3$ depending only on $d$, which follows from a union bound. 
\end{proof}

Finally, we turn to the proof of Lemma \ref{lemma:variance_upper_regime}.

\begin{proof}[Proof of Lemma \ref{lemma:variance_upper_regime}]
For any vertex $v \in V_n$, we have 
\begin{align*}
\E \left [ \norm{v_0 - \hat{v}(t)}_2^2 \mid \cF_t \right] & \stackrel{(a)}{\le} \sum\limits_{u \in V_n} \norm{u - v}_2^2 \pi_u(t) \\
& \stackrel{(b)}{\le} \sum\limits_{u \in V_n} \norm{u - v}_2^2 \frac{ X_u(t)}{X_v(t)},
\end{align*}
where (a) follows from properties of the variance and (b) follows since $\pi_u(t) = X_u(t) / Z(t)$ and $Z(t) \ge X_v(t)$. For any two vertices $u,v$, define the quantity $h_{vu}(t) : = \sum_{s = 0}^t | \cN_v(t) \setminus \cN_u(t) |$ as well as the event 
$$
\cE_{vu} : = \left \{ \log \frac{X_v(t)}{X_u(t)} \ge h_{vu}(t) \left( \frac{ D(Q_0 \| Q_1) + D(Q_1 \| Q_0)}{2} \right) \right\},
$$
where $D(Q_0 \| Q_1)$ is the Kullback-Leibler divergence between $Q_0$ and $Q_1$. As a shorthand, we denote $D : = \frac{1}{2} ( D(Q_0 \| Q_1) + D(Q_1 \| Q_0) )$. A Chernoff bound argument \cite[Theorem 2]{sridhar_poor_2019} implies that $\p_v(\cE_{vu}^c) \le e^{- c h_{vu}(t)}$, where $c$ is a constant depending only on $Q_0$ and $Q_1$. On the event $\cE_v : = \bigcup_{u \in V_n \setminus \{v\}} \cE_{vu}$, we can bound
\begin{multline}
\label{eq:squared_diff_sum}
\sum\limits_{u \in V_n} \norm{u - v}_2^2 \frac{X_u(t)}{X_v(t)}  \stackrel{(a)}{\le} \sum\limits_{u \in V_n} \norm{u - v}_2^2 e^{- D h_{vu}(t) } \\
\stackrel{(b)}{\le} \sum\limits_{u \in V_n} d(u,v)^2 e^{- D h_{vu}(t) }  \stackrel{(c)}{=} \sum\limits_{k = 1}^{n^{1/d}} k^2 | \cN_v(k)| e^{-D h_{k}(t) },
\end{multline}
where (a) holds on $\cE_v$, (b) follows from the inequality $\norm{\cdot}_2 \le \norm{\cdot}_1$, and (c) follows from grouping together the terms that are equidistant from $v$, where we define $h_k(t) := h_{vu}(t)$ for any $u$ such that $d(u,v) = k$.\footnote{Such a definition is valid due to vertex transitivity of lattices.} To simplify the final summation in \eqref{eq:squared_diff_sum}, we split it into terms corresponding to $k \le 2t$ and those corresponding to $k > 2t$. If $k \le 2t$ then we use the simple bounds $h_{k}(t) \ge t$ and $|\cN(k)| \le h(2t)$ to obtain $\sum_{k = 1}^{2t} k^2 | \cN_v(k) | e^{-D h_k(t) } \le 8t^3 h(2t) e^{-D t} \le e^{-Dt/2}$ for $t$ sufficiently large since $h$ grows polynomially. Next, noting that $h_k(t) = h(t)$ for $k > 2t$, we can bound the remaining terms as 
\begin{align}
\sum\limits_{k = 2t + 1}^{n^{1/d}} k^2 | \cN_v(k) | e^{- D h(t) } & \stackrel{(d)}{\le} n^{2/d} e^{-D h(t)} \sum\limits_{k = 2t + 1}^{n^{1/d}} | \cN_v(k) | \nonumber \\
\label{eq:squared_diff_sum_2}
& \stackrel{(e)}{\le} n^{1 + 2/d} e^{ - D h(t) },
\end{align}
where (d) follows from $k \le n^{1/d}$ and (e) follows since the sets $\{ \cN_v(k) \}_{k =0}^{n^{1/d}}$ partition $V_n$. If we assume that $t \ge H(6D^{-1} \log n)$, then the final summation in \eqref{eq:squared_diff_sum_2} is at most $e^{-Dt/2}$ as well. It remains to bound $\p_v(\cE_v^c)$. We can write
\begin{multline}
\label{eq:final_prob_bound}
\p_v(\cE_v^c) \stackrel{(f)}{\le} \sum\limits_{u \in V_n \setminus \{v \}} \p_v (\cE_{vu}^c)  \\
\stackrel{(g)}{\le} \sum\limits_{u \in V_n \setminus \{v \}} e^{-c h_{vu}(t) } \stackrel{(h)}{=} \sum\limits_{k = 1}^{n^{1/d}} | \cN_v(k)| e^{-c h_k(t) },
\end{multline}
where (f) is due to a union bound, (g) follows from a Chernoff bound argument \cite[Theorem 2]{sridhar_poor_2019}) and (h) follows from identical reasoning as (c) in \eqref{eq:squared_diff_sum}. We can bound the final summation in \eqref{eq:final_prob_bound} using identical reasoning as before, i.e., splitting the summation into terms corresponding to $k \le 2t$ and $k > 2t$. We arrive at a final bound of $h(2t) e^{-c t} + n e^{- c h(t) }$, which is at most $e^{-ct/2}$ for $t \ge H(2c^{-1}\log n)$. The desired statement follows from setting $\Delta_2 := \max\{6D^{-1},2c^{-1}\}$. 
\end{proof}

\end{document}